\documentclass[a4paper,12pt,reqno]{amsart}

\usepackage{amsmath,amssymb,amsthm}
\usepackage{latexsym}
\usepackage{color}
\usepackage{graphicx}
\usepackage{mathrsfs}
\usepackage{enumerate}
\usepackage[abbrev]{amsrefs}
\usepackage[T1]{fontenc}
\usepackage{mathtools}
\mathtoolsset{showonlyrefs=true}

\allowdisplaybreaks[4]

\theoremstyle{plain}
\newtheorem{thm}{Theorem}[section]
\newtheorem{lemm}[thm]{Lemma}

\newtheorem{cor}[thm]{Corollary}

\theoremstyle{definition}

\newtheorem{rem}[thm]{Remark}

\makeatletter

\@addtoreset{equation}{section}
\makeatother

\renewcommand{\div}{\operatorname{div}}
\newcommand{\dB}{\dot{B}}

\newcommand{\supp}{\operatorname{supp}}

\renewcommand{\leq}{\leqslant}
\renewcommand{\geq}{\geqslant}

\newcommand{\n}[1]{{\left\|#1\right\|}}

\newcommand{\lp}[1]{\left[#1\right]}
\newcommand{\Mp}[1]{\left\{#1\right\}}
\renewcommand{\sp}[1]{\left(#1\right)}

\begin{document}
\title[the $3$D Navier--Stokes equations with large rough vertical velocities]
{Global solutions to the Navier--Stokes equations with large vertical velocities in $\dot{B}_{\infty,\sigma}^{-1}(\mathbb{R}^3)$}
\author[M.~Fujii]{Mikihiro Fujii}
\address[M.~Fujii]{Graduate School of Science, Nagoya City University, Nagoya, 467-8501, Japan}
\email[M.~Fujii]{fujii.mikihiro@nsc.nagoya-cu.ac.jp}
%\date{}
%\thanks{}
\keywords{Navier--Stokes equations; well-posedness; critical Besov spaces}
\subjclass[2020]{35Q35,76D05}
\begin{abstract}
In this paper, we consider the Cauchy problem for the $3$D incompressible Navier--Stokes equations and prove the existence of unique global solutions in the framework that the horizontal component of the velocity field is small in some critical Besov spaces including the classical Fujita--Kato class, {while the vertical component is large in the wide class} $\dot{B}_{\infty,\sigma}^{-1}(\mathbb{R}^3)$ ($1 \leq \sigma < \infty$) {where the Navier--Stokes equations are known to be ill-posed in}.
\end{abstract}
\maketitle

\section{Introduction}\label{sec:intro}
Let us consider the incompressible Navier--Stokes equations on the three dimensional whole space:
\begin{align}\label{eq:NS-1}
    \begin{cases}
        \partial_t u - \Delta u + (u \cdot \nabla) u + \nabla P = 0, \quad & t>0,x \in \mathbb{R}^3, \\
        \div u= 0, & t \geq 0, x \in \mathbb{R}^3, \\
        u(0,x)=a(x), & x \in \mathbb{R}^3,
    \end{cases}
\end{align}
where $u=(u_1(t,x),u_2(t,x),u_3(t,x)):[0,\infty) \times \mathbb{R}^3 \to \mathbb{R}^3$ and $P=P(t,x):(0,\infty) \times \mathbb{R}^3 \to \mathbb{R}^3$ are the unknown velocity and pressure of the fluid, respectively, whereas $a=(a_1(x),a_2(x),a_3(x)):\mathbb{R}^3 \to \mathbb{R}^3$ denotes the given initial velocity satisfying the compatibility condition $\div a = 0$.
It is well-known that \eqref{eq:NS-1} has the invariant scaling; if $u$ and $p$ are solutions to  \eqref{eq:NS-1}, then $u^{\lambda}(t,x):=\lambda u(\lambda^2 t, \lambda x)$ and $P^{\lambda}(t,x):=\lambda^2 P(\lambda^2 t, \lambda x)$ also solve \eqref{eq:NS-1} for all $\lambda$.
A Banach space $X \subset \mathscr{S}'(\mathbb{R}^3)$ is said to be a scaling critical space if $\n{u^\lambda(0,\cdot)}_X=\n{u(0,\cdot)}_X$ for all $\lambda$.
It was Fujita and Kato \cite{Fuj-Kat-64} who initiated the constructing unique local solutions for large data and global solutions for small data in the critical Sobolev space $\dot{H}^{\frac{1}{2}}(\mathbb{R}^3)$.
Since this pioneering work, 
it has been widely known as the Fujita--Kato principle that finding unique solutions in scaling critical spaces is important.
Kato proved improved the functional framework of \cite{Kat-84} to a wider critical space $L^3(\mathbb{R}^3)$.
Leter, \cites{Can-Pla-96,Pla-98} refined results of \cites{Kat-84} and proved the global well-posedness of \eqref{eq:NS-1} for small data in the scaling critical Besov spaces $\dB_{p,q}^{3/p-1}(\mathbb{R}^3)$ for all $1 \leq p < \infty$ and $1 \leq q \leq \infty$. 
More precisely, there exists a positive constant $\delta=\delta(p,q)$ such that for any $a \in \dB_{p,q}^{3/p-1}(\mathbb{R}^3)$ with $\div a = 0$, \eqref{eq:NS-1} possesses a unique global solution 
\begin{align}\label{CL}
    u \in \widetilde{C}\sp{[0,\infty);\dB_{p,q}^{\frac{3}{p}-1}(\mathbb{R}^3)} \cap \widetilde{L^1}\sp{0,\infty;\dB_{p,q}^{\frac{3}{p}+1}(\mathbb{R}^3)}
\end{align}
and it holds
\begin{align}
    \n{u}_{E_{p,q}(0,\infty)} \leq C\n{u_0}_{\dB_{p,q}^{\frac{3}{p}-1}},
\end{align}
where we have defined
\begin{align}
    \n{u}_{E_{p,q}(T_1,T_2)}
    :=
    \n{u}_{\widetilde{L^{\infty}}(T_1,T_2;\dB_{p,q}^{\frac{3}{p}-1})}
    +
    \n{u}_{\widetilde{L^1}(T_1,T_2;\dB_{p,q}^{\frac{3}{p}+1})},
    \qquad
    0 \leq T_1 < T_2 \leq \infty,
\end{align}
and the $E_{p,q}(T_1,T_2)$ denote the set of all solenoidal distributions with the corresponding norm finite.
Here, the function spaces in \eqref{CL} are so-called the Chemin--Lerner spaces to be defined in Section \ref{sec:A}.
Here, the above result does not contain the end point case $p=\infty$, while Koch and Tataru \cite{Koc-Tat-01} proved the global well-posedness for small data in $BMO^{-1}(\mathbb{R}^3)$, which is the largest critical space ensuring the well-posedness of \eqref{eq:NS-1} ever known.
However, it was proven by \cites{Bou-Pav-08,Wan-15,Yon-10} that \eqref{eq:NS-1} is ill-posed in all end point critical Besov spaces $\dB_{\infty,q}^{-1}(\mathbb{R}^3)$ for all $1 \leq q \leq \infty$.
%See \cites{} for more results on well and ill-posedness of \eqref{eq:NS-1} in scaling critical spaces.

In this paper, we revisit the well-posedness in the scaling critical Besov spaces framework and show the global well-posedness for initial data whose the third exponent is arbitrary large and lies in the end point ill-posed Besov space $\dB_{\infty,\sigma}^{-1}(\mathbb{R}^3)$, provided that the first and second components of the initial data is sufficiently small in $\dB_{p,q}^{3/p-1}(\mathbb{R}^3)$ with $3/2 < p < 3$.
Let us state our main theorem precisely.
%The definition of function spaces in the following theorems are given in Appendix \ref{sec:A}.
\begin{thm}\label{thm:large}
    Let $p$, $q$, $r$, $\theta$, and $\sigma$ satisfy
    \begin{gather}
        \frac{3}{2} < p < 3,
        \qquad
        1 \leq \sigma \leq \infty,
        \qquad
        1 \leq q \leq 2\sigma,\\
        \qquad
        2 < r < \infty,
        \qquad
        0 < \theta < \frac{r}{r-1}\sp{1-\frac{3}{2p}}.
    \end{gather}
    Then, there exist positive constants $\eta=\eta(p,q,r,\theta,\sigma)$ and $C=C(p,q,r,\theta,\sigma)$ such that for any solenoidal initial data $a$ satisfying $a_{\rm h}=(a_1,a_2) \in \dB_{p,q}^{3/p-1}(\mathbb{R}^3)^2 $, $a_3 \in \dB_{\infty,\sigma}^{-1}(\mathbb{R}^3)$, and 
    \begin{align}\label{small}
        \n{a_{\rm h}}_{\dB_{p,q}^{\frac{3}{p}-1}}
        \exp \sp{C\n{a_3}_{\dB_{\infty,\sigma}^{-1}}^{\frac{r}{\theta}}}
        \leq \eta,
    \end{align}
    there exists a unique solution $u=(u_{\rm h},u_3)$ in the class 
    \begin{align}
        &
        u_{\rm h}=(u_1,u_2) \in \widetilde{C}\sp{[0,\infty) ; \dB_{p,q}^{\frac{3}{p}-1}(\mathbb{R}^3)}^2 \cap \widetilde{L^1}\sp{0,\infty ; \dB_{p,q}^{\frac{3}{p}+1}(\mathbb{R}^3)}^2, 
        \\
        &
        u_3 \in \widetilde{C}\sp{[0,\infty) ; \dB_{\infty,\sigma}^{-1}(\mathbb{R}^3)} \cap \widetilde{L^1}\sp{0,\infty ; \dB_{\infty,\sigma}^1(\mathbb{R}^3)}
    \end{align}
    with the estimates
    \begin{align}
        \n{u_{\rm h}}_{E_{p,q}(0,\infty)}
        \leq
        C\eta, 
        \qquad 
        \n{u_3}_{E_{\infty,\sigma}(0,\infty)}
        \leq
        C
        \n{a_3}_{\dB_{\infty,\sigma}^{-1}}
        +
        C\eta^2.
    \end{align}
\end{thm}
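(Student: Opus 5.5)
We prove Theorem \ref{thm:large} by rewriting the equations as a system coupling the horizontal and vertical components, and closing a fixed-point argument in which the vertical component enters \emph{linearly} in its own equation. The key structural fact is the divergence-free condition $\partial_3 u_3 = -\div_{\rm h} u_{\rm h}$. With it, every quadratic term containing two factors of $u_3$ can be rewritten so that at least one factor is horizontal. Indeed,
\begin{align}
(u\cdot\nabla)u_3 = (u_{\rm h}\cdot\nabla_{\rm h})u_3 - u_3\,\div_{\rm h}u_{\rm h},
\end{align}
so $u_3$ solves a \emph{linear} transport--diffusion equation whose coefficients are $u_{\rm h}$, plus pressure terms. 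The pressure is
\begin{align}
P=\sum_{j,k}(-\Delta)^{-1}\partial_j\partial_k(u_ju_k),
\end{align}
and its $u_3u_3$ part, $(-\Delta)^{-1}\partial_3\partial_3(u_3^2)$, is rewritten via $\partial_3(u_3^2)=-2u_3\div_{\rm h}u_{\rm h}$. Hence every nonlinear term is bilinear of the form $u_{\rm h}\otimes u_{\rm h}$ or $u_{\rm h}\otimes u_3$, with a derivative placed suitably.

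The steps, in order, are as follows.

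\textbf{Step 1 (linear estimate for $u_3$).} For given $u_{\rm h}\in E_{p,q}(0,\infty)$, we solve
\begin{align}
\partial_t u_3-\Delta u_3+\mathcal{L}(u_{\rm h},u_3)=-\partial_3 P_{\rm hh},
\end{align}
where $\mathcal L$ is linear in $u_3$ and $P_{\rm hh}$ is the pressure part quadratic in $u_{\rm h}$. We then prove a Gronwall-type bound of the form
\begin{align}
\n{u_3}_{E_{\infty,\sigma}(0,T)}\le C\Big(\n{a_3}_{\dB^{-1}_{\infty,\sigma}}+\n{u_{\rm h}}_{E_{p,q}}^2\Big)\exp\Big(C\int_0^T \n{u_{\rm h}(t)}_{X}^{\,\cdot}\,dt\Big)
\end{align}
for a suitable norm $X$. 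To handle the ill-posed space $\dB^{-1}_{\infty,\sigma}$, the products $u_{\rm h}u_3$ are estimated by paraproducts. We use $3/p-1>0$ (since $p<3$), together with Bernstein embeddings $\dB^{3/p}_{p,q}\hookrightarrow\dB^{0}_{\infty,q}$; the condition $q\le2\sigma$ enters when summing the high--high interactions in $\ell^\sigma$.

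\textbf{Step 2 (horizontal equation).} Here the nonlinearity is
\begin{align}
\mathbb{P}_{\rm h}\big[(u_{\rm h}\cdot\nabla_{\rm h})u_{\rm h}+u_3\partial_3u_{\rm h}\big].
\end{align}
The first term is handled by the standard Cannone--Planchon bilinear estimate in $E_{p,q}$. The second is linear in $u_{\rm h}$ with a large coefficient $u_3$, and this is where the exponential in \eqref{small} is produced. Writing $u_3\partial_3u_{\rm h}=\partial_3(u_3u_{\rm h})+u_3\div_{\rm h}u_{\rm h}$, we need the product estimate
\begin{align}
\n{u_3u_{\rm h}}_{\widetilde{L^1}(\dB^{3/p}_{p,q})}
\lesssim \int \n{u_3}_{\dB^{-1+2/r}_{\infty,\sigma}}^{r}\,\n{u_{\rm h}}\,dt+(\text{absorbable}),
\end{align}
using the interpolated norm $\widetilde{L^r}(\dB^{-1+2/r}_{\infty,\sigma})$ of $u_3$, which is controlled by $E_{\infty,\sigma}$. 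The restriction $3/2<p$ ensures $3/p-1<1$, and the condition on $\theta$ ensures the paraproduct regularity indices stay admissible: interpolating with weight $\theta$ between the free part and the dissipative part keeps the regularity sum positive.

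\textbf{Step 3 (time slicing and closure).} We split $(0,\infty)$ into $N$ intervals on each of which $\n{u_3}_{\widetilde{L^r}(\dB^{-1+2/r}_{\infty,\sigma})}^{r/\theta}$ is smaller than a fixed constant. The number of intervals satisfies
\begin{align}
N\lesssim \n{a_3}_{\dB^{-1}_{\infty,\sigma}}^{r/\theta}+1.
\end{align}
On each interval, $u_{\rm h}$ grows by at most a fixed factor, giving
\begin{align}
\n{u_{\rm h}}_{E_{p,q}}\le C\n{a_{\rm h}}\,e^{CN}\le C\eta.
\end{align}
Plugging this into Step 1 yields the stated bound for $u_3$ with the $C\eta^2$ correction. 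Existence then follows by an iteration scheme that alternates Steps 1 and 2, with contraction in a weaker norm. Uniqueness follows from the same difference estimates, and time continuity in $\widetilde C$ follows from the Chemin--Lerner structure.

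\textbf{Main obstacle.} The main obstacle is Step 2, and specifically the estimate of $u_3\partial_3 u_{\rm h}$ in $\dB^{3/p-1}_{p,q}$ with $u_3$ only in $\dB^{-1}_{\infty,\sigma}$-based spaces. The high--high paraproduct of $u_3$ (negative regularity, $L^\infty$-based) with $u_{\rm h}$ requires a regularity sum that is positive and integrability exponents that lie in $L^p$. My first attempt is to exploit that $p<3$ gives room via Bernstein, to take the $u_3$ factor in $\widetilde{L^r}(\dB^{-1+2/r}_{\infty,\sigma})$ and the $u_{\rm h}$ factor in $\widetilde{L^{r'}}(\dB^{3/p-1+2/r'}_{p,q})$, and to check that $\theta$ exactly measures the available slack, where $\theta<\frac{r}{r-1}(1-\frac{3}{2p})$.
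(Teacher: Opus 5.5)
Your overall architecture matches the paper: the $u_3$-equation is linear in $u_3$, time is sliced, and $u_{\rm h}$ grows geometrically across slices. The gap is in Step~2 and in your diagnosis of the main obstacle.

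\textbf{The divergence form fails.} The dangerous interaction is not high--high. It is low-frequency $u_{\rm h}$ times high-frequency $u_3$, with output needed in an $L^p$-based space. Since $u_3$ is only controlled in $L^\infty$-based norms, the $L^p$ norm must fall on $S_{k-3}$ of the horizontal factor, and this forces that factor to have a \emph{negative} $L^p$-regularity index. Every $L^p$-index of $u_{\rm h}$ available from $E_{p,q}$ is at least $3/p-1>0$. So your divergence rewriting is fatal rather than helpful (it should also read $u_3\partial_{x_3}u_{\rm h}=\partial_{x_3}(u_3u_{\rm h})+u_{\rm h}\,\nabla_{\rm h}\cdot u_{\rm h}$). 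The product estimate you need for $u_3u_{\rm h}$ in $\widetilde{L^1}(0,T;\dB^{3/p}_{p,q})$ is false. Take $u_3=2^ke^{-t2^{2k}}\cos(2^kx_1)$ and $u_{\rm h}=e^{t\Delta}\phi$ with $\phi$ fixed. All their $E$- and $\widetilde{L^r}$-norms are of order one, yet $\n{u_3u_{\rm h}}_{\widetilde{L^1}(0,\infty;\dB^{3/p}_{p,q})}\sim 2^{k(3/p-1)}\to\infty$. The same objection applies to the pressure cross terms $\partial_{x_3}\partial_{x_m}(-\Delta)^{-1}(u_3u_m)$ if you leave them in that form.

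\textbf{What the paper does instead.} It keeps the derivative on the horizontal factor. In $T_{\partial_{x_3}u_{\rm h}}u_3$, the factor $\partial_{x_3}u_{\rm h}\in\widetilde{L^{\rho'}}(0,T;\dB^{3/p-2+2/\rho'}_{p,q})$ has index $3/p-2/\rho$. This is negative iff $\rho<2p/3$, and a choice $\rho\geq1$ exists exactly because $p>3/2$ (Lemma~\ref{lemm:nonlin-2}). For the pressure, it regroups the cross terms using $\partial_{x_3}u_3=-\nabla_{\rm h}\cdot u_{\rm h}$ as in \eqref{eq:decom-p}, then moves $\partial_{x_3}$ onto the low-frequency factor as in \eqref{P2-2}.

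\textbf{Your proposed pairing also fails.} Taking $u_3\in\widetilde{L^r}(\dB^{-1+2/r}_{\infty,\sigma})$ and $u_{\rm h}\in\widetilde{L^{r'}}(\dB^{3/p-1+2/r'}_{p,q})$ cannot handle this term, because the index $3/p-2/r$ is positive for $r>2$. For example, take $u_3=\cos x_1$ and $u_{\rm h}=\phi(\lambda x)$ on $[0,1]$: the ratio of left to right side grows like $\lambda^{2/r-3/p}$ as $\lambda\to0$.

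\textbf{The role of $\theta$ and of $r/\theta$ is misidentified.} The norm $\widetilde{L^\rho}(\dB^{-1+2/\rho}_{\infty,\sigma})$ with $\rho<2p/3<2$ is close to the dissipative norm. Lemma~\ref{lemm:time} needs the time exponent to dominate $\sigma$, so in general you cannot slice the $\rho$-norm directly. The paper interpolates, $\n{u_3}_{\widetilde{L^\rho}(\dB^{-1+2/\rho}_{\infty,\sigma})}\leq\n{u_3}_{E_{\infty,\sigma}}^{1-\theta}\n{u_3}_{\widetilde{L^r}(\dB^{-1+2/r}_{\infty,\sigma})}^{\theta}$ with $1/\rho=1-\theta+\theta/r$. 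The requirement $\rho<2p/3$ is then exactly $\theta<\frac{r}{r-1}(1-\frac{3}{2p})$. Positivity of the regularity sum in the remainder is automatic from $p<3$ and has nothing to do with $\theta$.

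\textbf{Step~3 slices the wrong quantity.} The slicing must ensure $4C_0\n{u_3}_{E_{\infty,\sigma}(0,T^*)}^{1-\theta}\n{u_3}_{\widetilde{L^r}(T_n,T_{n+1};\dB^{-1+2/r}_{\infty,\sigma})}^{\theta}\leq1$. The threshold depends on $\n{u_3}_{E_{\infty,\sigma}}$, and this is what produces $N\leq(4C_0)^{r/\theta}\n{u_3}_{E_{\infty,\sigma}}^{r/\theta}$. Slicing the $\widetilde{L^r}$-norm below a fixed constant, as you propose, gives $N\lesssim\n{a_3}_{\dB^{-1}_{\infty,\sigma}}^{r}$, which contradicts your own claim $N\lesssim\n{a_3}^{r/\theta}+1$. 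It also does not make the $\rho$-norm small when $\n{u_3}_{E_{\infty,\sigma}}$ is large.

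\textbf{Steps~1 and~3 are circular as written.} The $u_3$ bound needs $u_{\rm h}$ small, and smallness of $u_{\rm h}$ needs the $u_3$ bound to count $N$. An alternating global iteration is awkward here because the time partition depends on the unknown. The paper closes the loop with local well-posedness plus a continuity argument on $T^*=\sup\{T<T_{\rm max}:\n{u_{\rm h}}_{E_{p,q}(0,T)}\leq 1/(4C_0)\}$. On that interval the term $C_0\n{u_{\rm h}}_{E_{p,q}}\n{u_3}_{E_{\infty,\sigma}}$ is simply absorbed, so no Gronwall bound is needed.
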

\begin{rem}
    Let us mention some remarks on our result.
    \begin{enumerate}
        \item 
        Chemin, Gallagher, and Paicu \cite{Che-Gal-Pai-11} constructed a unique global solution to \eqref{eq:NS-1} for initial data whose all components may be large in the weakest class $\dB_{\infty,\infty}^{-1}(\mathbb{R}^3)$.
        However, their initial data should be based on a small solenoidal vector field in some Gevrey class based on a $L^2$-Besov space.
        In comparison with this, our theorem do not need Gevrey condition, however we could not reach $\sigma=\infty$ and horizontal component need to be small. 
        \item 
        It was shown by Iwabuchi and Nakamura \cite{Iwa-Nak-13} that \eqref{eq:NS-1} is globally well-posed under the condition 
        \begin{align}
            \n{a}_{\dB_{\infty,\infty}^{-1}}
            \leq
            \frac{C_p}{\log\sp{e+\n{a}_{\dB_{p,\infty}^{\frac{3}{p}-1}}}}, \qquad 3 < p < \infty.
        \end{align}
        This result implies that the global well-posedness holds for large data in $\dB_{p,\infty}^{\frac{3}{p}-1}(\mathbb{R}^3)$ but small in $\dB_{\infty,\infty}^{-1}(\mathbb{R}^3)$.
        Compared with this, our advantage is that we need not assume the third component of the initial velocity is in the critical Besov spaces of finite integrability exponent and smallness in the end point class.
        \item 
        Since our proof is based on the Banach fixed point argument, we may readily see from our proof that the solution map $a \mapsto u$ from $\dB_{p,q}^{3/p-1}(\mathbb{R}^2)^2 \times \dB_{\infty,\sigma}^{-1}(\mathbb{R})$ to $E_{p,q}(0,T)^2 \times E_{\infty,\sigma}(0,T)$ is Lipschitz continuous,
        whereas \cites{Bou-Pav-08,Wan-15,Yon-10} proved the discontinuity of the solution map $\dB_{\infty,\sigma}^{-1}(\mathbb{R})^3 \to E_{\infty,\sigma}(0,T)^3$.
        \item 
        Let us give an example of solenoidal initial data whose horizontal components belongs to $\dB_{p,q}^{3/p-1}(\mathbb{R}^2)^2$ with all finite $p,q$, whereas the third component lies in $\dB_{\infty,\sigma}^{-1}(\mathbb{R}^3) \setminus \dB_{p,q}^{3/p-1}(\mathbb{R}^3)$.
        For $A,\delta>0$, $(k_1,k_2) \in \mathbb{R}^2 \setminus \{0\}$, and a solenoidal vector $\psi \in \mathscr{S}(\mathbb{R}^3)^3$ with $0 \notin \supp \widehat{\psi}$,
        we set
        \begin{align}
            a(x)
            :=
            \begin{pmatrix}
                0 \\ 0 \\ A \cos (k_1 x_1 + k_2 x_2)
            \end{pmatrix}
            + \delta \psi.
        \end{align}
        Then, we readily see that $\div a = 0$. 
        Moreover, since the $j$th Littlewood--Paley dyadic block $\Delta_j=\varphi_j*a$ is given by 
        \begin{align}
            \Delta_j a(x)
            =
            \begin{pmatrix}
                0 \\ 0 \\ A \widehat{\varphi_j}(k_1,k_2,0)\cos (k_1 x_1 + k_2 x_2)
            \end{pmatrix}
            +
            \delta 
            \Delta_j\psi,
        \end{align}
        it holds 
        \begin{align}
            \n{a_{\rm h}}_{\dB_{p,q}^{3/p-1}} \leq \delta\n{\psi_{\rm h}}_{\dB_{p,q}^{3/p-1}}
        \end{align}
        and
        \begin{align}
            c
            \frac{A}{\sqrt{k_1^2+k_2^2}}
            -
            \delta 
            \n{\psi_3}_{\dB_{\infty,\sigma}^{-1}}
            \leq 
            \n{a_3}_{\dB_{\infty,\sigma}^{-1}}
            \leq 
            C
            \frac{A}{\sqrt{k_1^2+k_2^2}}
            +
            \delta 
            \n{\psi_3}_{\dB_{\infty,\sigma}^{-1}}.
        \end{align}
        As $\cos (k_1x_1 + k_2x_2) \notin L^p(\mathbb{R}^3)$ unless $p=\infty$, we see that $a_3 \in \dB_{\infty,\sigma}^{-1}(\mathbb{R}^3) \setminus \dB_{p,q}^{3/p-1}(\mathbb{R}^3)$.
        Note that the Navier--Stokes flow from this initial data could not be constructed by known results.

        On the other hand, 
        the ill-posedness result in $\dB_{\infty,\sigma}^{-1}(\mathbb{R}^3)$ by \cites{Bou-Pav-08,Wan-15,Yon-10} considered the initial data given by 
        \begin{align}
            a_{Q,r}(x)
            =
            \frac{Q}{\sqrt{r}}
            \sum_{k=1}^r
            \begin{pmatrix}
                0 \\ \cos (h_kx_1) \\ \cos (h_kx_1 -x_2)
            \end{pmatrix}, 
            \qquad
            h_k=2^{k(k-1)/2}\gamma^{k-1}\eta.
        \end{align}
        Comparing this with our example, we see that the second component of $a_{Q,r}(x)$ plays the essential role for the ill-posed phenomenon.
    \end{enumerate}
\end{rem}
We explain the idea of the proof of Theorem \ref{thm:large}.
We first mention why we may choose the third component of the initial velocity from the ill-posed class $\dB_{\infty, \sigma}^{-1}(\mathbb{R}^3)$,
although it was shown that the bilinear estimates 
\begin{align}\label{est:bad}
    \n{\int_0^te^{(t-\tau)\Delta}\mathbb{P}(u(\tau) \cdot \nabla) v(\tau) d\tau}_{E_{\infty,\sigma}(0,T)}
    \leq
    C
    \n{u}_{E_{\infty,\sigma}(0,T)}
    \n{v}_{E_{\infty,\sigma}(0,T)}
\end{align}
{\it fails} for all $1 \leq \sigma \leq \infty$; See \cites{Bou-Pav-08,Wan-15,Yon-10}.
Using the relation $\partial_{x_3}u_3 = - \div_{\rm h} u_{\rm h} = -\nabla_{\rm h} \cdot  u_{\rm h}$ via the divergence free condition to rewrite the first equation of \eqref{eq:NS-1} as 
\begin{align}
    &
    \partial_t u_{\rm h} - \Delta u_{\rm h} + (u_{\rm h} \cdot \nabla_{\rm h}) u_{\rm h} +  u_3 \partial_{x_3}u_{\rm h} + \nabla_{\rm h}P = 0,  \label{eq:u_h}
    \\
    &
    \partial_t u_3 - \Delta u_3 + (u_{\rm h} \cdot \nabla_{\rm h}) u_3 -  u_3 \nabla_{\rm h} \cdot u_{\rm h} + \partial_{x_3}P = 0, \label{eq:u_3}
\end{align}
we may regard the equation for $u_3$ is linear, which enables us to show
the estimates of the following type:
\begin{align}\label{est:good}
    \n{\int_0^te^{(t-\tau)\Delta}(u_3 \nabla_{\rm h}\cdot u_{\rm h})(\tau) d\tau}_{E_{\infty,\sigma}(0,T)}
    \leq
    C
    \n{u_{\rm h}}_{E_{p,q}(0,T)}
    \n{v_3}_{E_{\infty,\sigma}(0,T)}
\end{align}
with $3/2 < p < 3$;
see Section \ref{sec:pre} below for more detail.
Since \eqref{eq:u_h}-\eqref{eq:u_3} has no nonlinear terms that is the second order with respect to $u_3$ and thus the method on \eqref{est:good} may close the all nonlinear estimates for $u_3$.
We also remark that the nonlinear term $u_3 \nabla_{\rm h} \cdot u_{\rm h}$ in \eqref{eq:u_3} cannot be written in the divergence form\footnote{The other nonlinear terms are able to be written in the divergence form. Therefore the estimates for them actually hold for $p<\infty$.}, which is the worst term in the view point of the regularity and requires the assumption $p<3$. 
For the relaxed smallness condition \eqref{small} is inspired by the seminal works \cites{Zha-09,Zha-09-E,Pai-Zha-11}, where they initially proposed the rewritten system \eqref{eq:u_h}-\eqref{eq:u_3} for the anisotropic Navier--Stokes equations\footnote{\eqref{eq:NS-1} with the viscosity replaced by horizontal Laplacian $\Delta_{\rm h}u=(\partial_{x_1}^2+\partial_{x_2}^2)u$.}, and proved the global well-posedness for initial data satisfying\footnote{The function space $\mathcal{B}^{0,\frac{1}{2}}(\mathbb{R}^3)$ is a kind of critical anisotropic Besov spaces introduced by \cite{Pai-05}.} $\n{a_{\rm h}}_{\mathcal{B}^{0,1/2}}\exp\sp{C\n{a_3}_{\mathcal{B}^{0,1/2}}^4} \ll 1$.
Their method was based on the energy estimates in some space-time Chemin--Lerner type spaces with some integrable weight function in time.
However, to our best effort applying their method to our aim, it works well only for the case of $p=2$ and $r=4$ and the functional framework for $u_3$ should be same as for $u_{\rm h}$.
To obtain Theorem \ref{thm:large}, we first prepare para-product estimates in Besov spaces; see Lemmas \ref{lemm:nonlin-1}, \ref{lemm:nonlin-2}, and \ref{lemm:nonlin-3} below. 
Then, we use not the time-weighted energy argument but the method of the decomposition of the time interval, which was proposed in the stability argument for large flow in the author's previous works \cites{Fuj-pre}; see Lemma \ref{lemm:time}.
This method enables us to extend the life span of the local solutions under some relaxed smallness condition \eqref{small}
since some space-time norm of $u_3$ will be sufficiently small in the each decomposed time interval.

This paper is organized as follows.
In Section \ref{sec:pre}, we prepare several useful tools in our analysis.
In Section \ref{sec:pf}, we present the proof of our main result.
Finally, in Appendix \ref{sec:A}, we summarize the definitions and basic properties of functions spaces that are used in this paper.

Throughout this paper, we denote by $C$ the constant, which may differ in each line. In particular, $C=C(a_1,...,a_n)$ means that $C$ depends only on $a_1,...,a_n$. 
For any integrability exponent $1\leq p \leq \infty$, we denote by $p'$ the H\"{o}lder conjugate of $p$. 
For two Banach spaces $X$ and $Y$ with $X \cap Y \neq \varnothing$, 
we write $\| \cdot \|_{X \cap Y} := \| \cdot \|_X + \| \cdot \|_Y$.
For $f=(f_1,...,f_n) \in X^n$, we set $\| f \|_{X}:= \| f_1\|_X + ... + \| f_n \|_X$.

\section{Key lemmas}\label{sec:pre}
In this section, we establish several lemmas that are used in the proof of our main results.
\subsection*{Time decomposition lemma}
To begin with, we introduce the time decomposition lemma that plays a key role in constructing global solution with large vertical velocity.
\begin{lemm}\label{lemm:time}
    Let $1 \leq p \leq \infty$, $1 \leq \sigma \leq r < \infty$, $s \in \mathbb{R}$, and $0<T\leq \infty$.
    For any $f \in \widetilde{L^r}(0,T;\dB_{p,\sigma}^s(\mathbb{R}^3))$ and $\varepsilon>0$, we define an integer 
    \begin{align}
        N := \left \lfloor \frac{\| f \|_{\widetilde{L^r}(0,T;\dB_{p,\sigma}^s)}^r}{\varepsilon^r} \right \rfloor,
    \end{align}
    where $\lfloor \cdot \rfloor$ denotes the floor function.
    Then, there exists a sequence of time $0=T_0<T_1<\dots<T_N<T_{N+1}=T$ such that 
    \begin{align}
        \n{f}_{\widetilde{L^r}(T_n,T_{n+1};\dB_{p,\sigma}^s)} \leq \varepsilon, \qquad n=0,\dots,N.
    \end{align}
\end{lemm}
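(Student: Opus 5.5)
The Chemin--Lerner norm is
\begin{align}
    \n{f}_{\widetilde{L^r}(T_1,T_2;\dB_{p,\sigma}^s)}
    =
    \Bigl\|\Bigl\{2^{sj}\n{\Delta_j f}_{L^r(T_1,T_2;L^p)}\Bigr\}_{j\in\mathbb{Z}}\Bigr\|_{\ell^\sigma}.
\end{align}
Since $\sigma \leq r$, its $r$-th power is superadditive over adjacent intervals. Our plan is to use this to select the times $T_n$ greedily.

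\emph{Step 1: superadditivity.} Set $F(t_1,t_2):=\n{f}_{\widetilde{L^r}(t_1,t_2;\dB_{p,\sigma}^s)}^r$ and $a_j(t_1,t_2):=2^{sjr}\n{\Delta_j f}_{L^r(t_1,t_2;L^p)}^r$, so that $F=\n{\{a_j\}}_{\ell^{\sigma/r}}$. For $t_1<t_2<t_3$ we have $a_j(t_1,t_3)=a_j(t_1,t_2)+a_j(t_2,t_3)$. Because $\sigma/r\leq 1$, the reverse Minkowski inequality in $\ell^{\sigma/r}$ for nonnegative sequences gives
\begin{align}
    F(t_1,t_3)\geq F(t_1,t_2)+F(t_2,t_3).
\end{align}
For $\sigma=r$ this is an equality. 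Iterating, for any partition $0=T_0<\dots<T_{N+1}=T$ we get $\sum_n F(T_n,T_{n+1})\leq F(0,T)$.

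\emph{Step 2: continuity.} For fixed $t_1$, the map $t\mapsto F(t_1,t)$ is nondecreasing and continuous on $[t_1,T)$, with $F(t_1,t_1)=0$; the analogous statement holds in the left endpoint. Each $a_j$ is continuous in $t$ because $r<\infty$, by absolute continuity of the integral. Continuity of the $\ell^{\sigma/r}$ quasi-norm then follows from dominated convergence: $a_j(t_1,t)\leq a_j(0,T)$, and $\{a_j(0,T)\}\in\ell^{\sigma/r}$ with $\sigma/r<\infty$. I expect this to be the only delicate point, and it is where $r<\infty$ and $\sigma<\infty$ are used.

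\emph{Step 3: greedy construction.} Put $T_0=0$. Given $T_n$, if $F(T_n,T)\leq\varepsilon^r$, stop and set $T_{n+1}=T$. Otherwise continuity gives $T_{n+1}\in(T_n,T)$ with $F(T_n,T_{n+1})=\varepsilon^r$. Suppose $m$ cuts are made before stopping. Superadditivity gives $m\varepsilon^r\leq F(0,T)$, hence $m\leq N$ and the process terminates. If $m<N$, insert extra intermediate points arbitrarily; by monotonicity the smaller subintervals still have norm at most $\varepsilon$. This yields exactly $N+1$ intervals, each of norm at most $\varepsilon$. The case $T=\infty$ causes no trouble, because the final interval $(T_m,\infty)$ is handled by the stopping criterion, and the limit $F(T_n,t)\to F(T_n,\infty)$ follows from the same dominated convergence argument.
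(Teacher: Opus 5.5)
Your proposal is correct and is essentially the paper's argument: you cut greedily at the times where the $\widetilde{L^r}(\cdot;\dB_{p,\sigma}^s)$ norm reaches exactly $\varepsilon$, count the cuts with the Minkowski (superadditivity) inequality that comes from $\sigma\leq r$, and use $r,\sigma<\infty$ for continuity in the endpoint. The one difference is in your favor: the paper defines each $T_n$ by a supremum, always makes $N$ cuts, and bounds the last piece via $(N+1)\varepsilon^r>\|f\|^r$, which tacitly assumes the remainder stays above $\varepsilon$ until the $N$th cut (this can fail when $\sigma<r$ or when $\|f\|^r=N\varepsilon^r$, giving $T_n=T$), whereas your stop-and-pad step handles that case explicitly.
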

\begin{proof}
    It suffices to consider the case of $N \geq 1$.
    Note that for $\tau \geq 0$, the function $[\tau,\infty) \ni t \mapsto \n{f}_{\widetilde{L^r}(\tau,t;\dB_{p,\sigma}^s)}$ is continuous due to $\sigma,r < \infty$.
    Let us define 
    \begin{align}
        T_n := 
        \begin{cases}
            0, & n=0, \\
            \sup\Mp{\tau >T_{n-1}\ ; \ \n{f}_{\widetilde{L^r}(T_{n-1},\tau;\dB_{p,\sigma}^s)} \leq \varepsilon}, & n=1,\dots,N, \\
            T, &n=N+1.
        \end{cases}
    \end{align}
    Then, we see that 
    \begin{align}
        \n{f}_{\widetilde{L^r}(T_n,T_{n+1};\dB_{p,\sigma}^s)} = \varepsilon, \qquad n=0,\dots,N-1.
    \end{align}
    It follows from the Minkowski inequality that 
    \begin{align}
        \| f \|_{\widetilde{L^r}(0,T;\dB_{p,\sigma}^s)}
        ={}&
        \Mp{
        \sum_{j \in\mathbb{Z}}
        \sp{
        2^{sj}\n{\Delta_j f}_{L^r(0,T;L^p)}
        }^{\sigma}
        }^{\frac{1}{\sigma}}
        \\
        ={}&
        \lp{
        \sum_{j \in\mathbb{Z}}
        \Mp{
        \sum_{n=0}^{N}
        \sp{
        2^{sj}\n{\Delta_j f}_{L^r(T_n,T_{n+1};L^p)}
        }^r
        }^{\frac{\sigma}{r}}
        }^{\frac{1}{\sigma}}
        \\
        \geq 
        {}&
        \lp{
        \sum_{n=0}^{N}
        \Mp{
        \sum_{j \in\mathbb{Z}}
        \sp{
        2^{sj}\n{\Delta_j f}_{L^r(T_n,T_{n+1};L^p)}
        }^\sigma
        }^{\frac{r}{\sigma}}
        }^{\frac{1}{r}}
        \\
        = 
        {}&
        \sp{
        \sum_{n=0}^{N}
        \| f \|_{\widetilde{L^r}(T_n,T_{n+1};\dB_{p,\sigma}^s)}^r
        }^{\frac{1}{r}}
        \\
        = 
        {}&
        \sp{
        N\varepsilon^r
        +
        \| f \|_{\widetilde{L^r}(T_N,T;\dB_{p,\sigma}^s)}^r
        }^{\frac{1}{r}},
    \end{align}
    which implies 
    \begin{align}
        \| f \|_{\widetilde{L^r}(0,T;\dB_{p,\sigma}^s)}^r
        \geq 
        N\varepsilon^r
        +
        \| f \|_{\widetilde{L^r}(T_N,\infty;\dB_{p,\sigma}^s)}^r.
    \end{align}
    Combining this with the fact $(N+1)\varepsilon^r \geq \| f \|_{\widetilde{L^r}(0,T;\dB_{p,\sigma}^s)}^r$ due to the definition of $N$, we obtain $\| f \|_{\widetilde{L^r}(T_N,T;\dB_{p,\sigma}^s)}^r \leq \varepsilon^r$.
    This completes the proof.
\end{proof}
\subsection*{Nonlinear estimates}
Next, we establish several nonlinear estiamtes.
%For $1 \leq p,q \leq \infty$ and $0<T\leq \infty$, we define 
%\begin{align}
%    E_{p,q}(0,T)
%    :=
%    \widetilde{C}\sp{[0,T) ; \dB_{p,q}^{\frac{3}{p}-1}(\mathbb{R}^3)} 
%    \cap 
%    \widetilde{L^1}\sp{0,T ; \dB_{p,q}^{\frac{3}{p}+1}(\mathbb{R}^3)}.
%\end{align}
\begin{lemm}\label{lemm:nonlin-1}
    Let $p$, $q$, $\sigma$, and $r$ satisfy
    \begin{align}
        1 < p < 3,
        \quad
        1 \leq q,\sigma \leq \infty,
        \quad
        2<r<\infty. 
    \end{align}
    Then, there exists a positive constant $C$ such that
    \begin{align}
        &
        \begin{aligned}
        &
        \n{\int_0^t e^{(t-\tau)\Delta}(u_{\rm h}(\tau) \cdot \nabla_{\rm h})v_{\rm h}(\tau) d\tau}_{E_{p,q}(0,T)}
        \\
        &\qquad
        \leq
        C
        \n{u_{\rm h}}_{\widetilde{L^r}(0,T;\dB_{p,q}^{\frac{3}{p}-1+\frac{2}{r}})}
        \n{v_{\rm h}}_{\widetilde{L^
        {r'}}(0,T;\dB_{p,q}^{\frac{3}{p}-1+\frac{2}{r'}})},
        \end{aligned}
        \\
        &
        \begin{aligned}
        &
        \n{\int_0^t e^{(t-\tau)\Delta}(u_{\rm h}(\tau) \cdot \nabla_{\rm h}v_3(\tau)) d\tau}_{E_{\infty,\sigma}(0,T)}
        \\
        &\qquad
        \leq
        C
        \n{u_{\rm h}}_{\widetilde{L^r}(0,T;\dB_{p,q}^{\frac{3}{p}-1+\frac{2}{r}})}
        \n{v_3}_{\widetilde{L^{r'}}(0,T;\dB_{\infty,\sigma}^{-1+\frac{2}{r'}})},
        \end{aligned}
        \\
        &
        \begin{aligned}
        &
        \n{\int_0^t e^{(t-\tau)\Delta}(u_3(\tau) \nabla_{\rm h} \cdot v_{\rm h}(\tau)) d\tau}_{E_{\infty,\sigma}(0,T)}
        \\
        &
        \qquad
        \leq
        C
        \n{u_3}_{\widetilde{L^{r}}(0,T;\dB_{\infty,\sigma}^{-1+\frac{2}{r}})}
        \n{v_{\rm h}}_{\widetilde{L^{r'}}(0,T;\dB_{p,q}^{\frac{3}{p}-1+\frac{2}{r'}})}
        \end{aligned}
    \end{align}
    for all $T>0$ and $u=(u_{\rm h},u_3) \in E_{p,q}(0,T)^2 \times E_{\infty,\sigma}(0,T)$.
\end{lemm}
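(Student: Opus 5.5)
The plan is to reduce all three estimates to a product estimate in Chemin--Lerner spaces and then apply the maximal regularity of the heat semigroup in the form
\begin{align}
    \n{\int_0^t e^{(t-\tau)\Delta} F(\tau)\, d\tau}_{E_{p,q}(0,T)}
    \leq C \n{F}_{\widetilde{L^1}(0,T;\dB_{p,q}^{\frac{3}{p}-1})},
\end{align}
together with its analogue $\n{\cdot}_{E_{\infty,\sigma}(0,T)} \leq C\n{F}_{\widetilde{L^1}(0,T;\dB_{\infty,\sigma}^{-1})}$. Both follow from the dyadic bound $\n{\Delta_j e^{t\Delta}f}_{L^p}\leq Ce^{-c2^{2j}t}\n{\Delta_j f}_{L^p}$ and Young's inequality in time on each frequency block. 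It then suffices to prove:
\begin{align}
    \n{u_{\rm h}\cdot\nabla_{\rm h} v_{\rm h}}_{\widetilde{L^1}(\dB_{p,q}^{\frac3p-1})}, \quad
    \n{u_{\rm h}\cdot\nabla_{\rm h} v_3}_{\widetilde{L^1}(\dB_{\infty,\sigma}^{-1})}, \quad
    \n{u_3\nabla_{\rm h}\cdot v_{\rm h}}_{\widetilde{L^1}(\dB_{\infty,\sigma}^{-1})}
\end{align}
are bounded by the stated products. Time integrability is handled block by block via Hölder with $1/r+1/r'=1$. The regularity indices add correctly: $(\frac3p-1+\frac2r)+(\frac3p-1+\frac2{r'})=\frac6p$, and after losing one derivative and using the embedding $\dB_{p,q}^{3/p}\hookrightarrow\dB_{\infty,q}^{0}$ the indices match the targets.

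For each product $fg$ I will use Bony's decomposition $fg=T_fg+T_gf+R(f,g)$.

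\textbf{First estimate.} Put $s_1=\frac3p-1+\frac2r$ and $s_2=\frac3p-1+\frac2{r'}$.
\begin{itemize}
\item For $T_{u}\nabla v$, bound $\n{S_{j-1}u}_{L^r_tL^\infty}\lesssim \sum_{k<j}2^{k(1-\frac2r)}c_k$, which is summable to $2^{j(1-2/r)}$ times an $\ell^q$ sequence because $1-\frac2r>0$ (this uses $r>2$). The derivative on $v$ contributes $2^{j(1-s_2)}$ in $L^{r'}L^p$, and the sum gives index $s_2-1+(\frac3p-1+\frac2r)-(1-\frac2r)$, which equals $\frac3p-1$ after bookkeeping.
\item For $T_{\nabla v}u$, use $1-\frac2{r'}<0$, i.e.\ $\nabla v$ in $L^{r'}_t\dB^{-(1-2/r')-\cdots}_{\infty}$ of negative index, which makes the low-frequency sum converge.
\item For the remainder, write $u_{\rm h}\cdot\nabla_{\rm h}v_{\rm h}$ in divergence form only if needed. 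The remainder needs a positive sum of indices, and $s_1+s_2-1=\frac6p-3>0$ holds since $p<2$? It does not in general. So I will instead estimate $R$ in $L^{p/2}$ (or $L^{p}\cdot L^\infty$ with Bernstein), where the condition becomes $\frac6p-2>0$, i.e.\ $p<3$, and then pass back to $L^p$ by Bernstein $\dB^{s+3/p}_{p/2}\hookrightarrow\dB^{s}_p$ when $p\geq 2$. When $p<2$, use $L^1$ and Bernstein instead.
\end{itemize}

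\textbf{Second and third estimates.} Here $u_3$ and $v_3$ lie in $\dB_{\infty,\sigma}^{-1+2/r}$, which has regularity below $1$. The output space $\dB_{\infty,\sigma}^{-1}$ only requires $L^\infty$ control.
\begin{itemize}
\item In the second estimate, $T_{u_{\rm h}}\nabla v_3$ is handled by putting $u_{\rm h}$ into $L^\infty$ via $\dB_{p,q}^{3/p-1+2/r}\hookrightarrow \dB_{\infty,q}^{-1+2/r}$ as above.
\item In $T_{\nabla v_3}u_{\rm h}$, the index $-2+\frac2{r'}<0$ allows summation, and the $\ell^\sigma$ norm is inherited from the high-frequency factor $u_{\rm h}$. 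Either the paraproduct is measured with $\ell^\sigma$ from the high factor, or I use $q\le 2\sigma$ together with the fact that the $\ell^q$ sequence of $u_{\rm h}$ embeds as needed.
\item For the remainders, the second estimate can be written as $\nabla_{\rm h}\cdot(u_{\rm h}v_3)$ using $\div_{\rm h}u_{\rm h}=-\partial_3u_3$. But $u,v$ are independent here, so instead I bound $R(u_{\rm h},\nabla v_3)$ directly in $L^p$ and map into $L^\infty$ by Bernstein, gaining $2^{3j/p}$. The index sum is $\frac3p-2+\frac2r+\frac2{r'}-1+\cdots$, which becomes $\frac3p-1>0$ for $p<3$ once $\frac3p$ is absorbed by Bernstein.
\item The third estimate, $R(u_3,\nabla v_{\rm h})$, has index sum $(-1+\frac2r)+(\frac3p-2+\frac2{r'})=\frac3p-1$, which is positive exactly when $p<3$. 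This is the crux: it is the non-divergence term that forces $p<3$.
\end{itemize}

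The main obstacle I expect is the remainder $R(u_3,\nabla_{\rm h}\cdot v_{\rm h})$ in $\widetilde{L^1}(\dB^{-1}_{\infty,\sigma})$. I would estimate $\Delta_j R$ by $\sum_{k\ge j-2}\n{\Delta_k u_3}_{L^r L^\infty}\n{\widetilde\Delta_k\nabla v_{\rm h}}_{L^{r'}L^\infty}$, using $\n{\widetilde\Delta_k\nabla v}_{L^\infty}\lesssim 2^{k(1+3/p)}\n{\widetilde\Delta_kv}_{L^p}$. This yields $\sum_{k\ge j}2^{-k(3/p-1)}a_kb_k$, and I then apply Young's inequality for sequences with the positive exponent $\frac3p-1$ to get $2^{j}\ell^\sigma$. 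Hölder in $\ell$ is done with $a\in\ell^\sigma$ and $b\in\ell^q\subset\ell^\infty$.
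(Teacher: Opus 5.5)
Your overall route is the paper's: maximal regularity (Lemma \ref{lemm:max-reg}) reduces everything to $\widetilde{L^1}$ bounds on the products, which you split by Bony's decomposition and control with Lemma \ref{lemm:nonlin}. In particular, the first estimate is just Corollary \ref{cor:para} with $p_1=p_2=p$. Your index arithmetic there slips: since $s_1+s_2=6/p$, the remainder index is $6/p-1$, not $6/p-3$. So the remainder only needs $p<6$, and your $L^{p/2}$/$L^1$ detour (which is the proof of Lemma \ref{lemm:nonlin}(ii)) is harmless but unnecessary. However, two steps in the vertical estimates fail as written.

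\textbf{The crux computation for $R(u_3,\nabla_{\rm h}\cdot v_{\rm h})$.} Set $a_k=2^{k(-1+2/r)}\|\Delta_k u_3\|_{L^r_tL^\infty_x}$ and $b_k=2^{k(3/p-1+2/r')}\|\widetilde{\Delta}_k v_{\rm h}\|_{L^{r'}_tL^p_x}$. If you put $\widetilde{\Delta}_k\nabla v_{\rm h}$ into $L^\infty$ by Bernstein at frequency $2^k$, you get $\|\Delta_j R\|_{L^1_tL^\infty_x}\lesssim\sum_{k\geq j-N}2^{k}a_kb_k$. Hence $2^{-j}\|\Delta_jR\|_{L^1_tL^\infty_x}\lesssim\sum_{k\geq j-N}2^{k-j}a_kb_k$, which diverges. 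This is the $L^\infty\times L^\infty$ remainder with index sum $(-1+\frac2r)+(-\frac2r)=-1<0$, which is exactly the obstruction behind the failure of the bilinear estimate in $E_{\infty,\sigma}$. The factor $2^{-k(3/p-1)}$ you write down appears only under a different procedure. You must keep $\nabla v_{\rm h}$ in $L^p$, estimate $\Delta_jR$ in $L^p$ by H\"older ($L^\infty\times L^p$), and apply Bernstein to the output block, $\|\Delta_jR\|_{L^\infty}\lesssim 2^{3j/p}\|\Delta_jR\|_{L^p}$. Then $2^{-j}\|\Delta_jR\|_{L^1_tL^\infty_x}\lesssim\sum_{k\geq j-N}2^{-(k-j)(3/p-1)}a_kb_k$, which is in $\ell^\sigma$ for $p<3$. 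Equivalently, bound $R$ in $\widetilde{L^1}(0,T;\dB_{p,\sigma}^{3/p-1})$ by Lemma \ref{lemm:nonlin}(ii) with $p_1=\infty$, $p_2=p$, and embed into $\widetilde{L^1}(0,T;\dB_{\infty,\sigma}^{-1})$. This is what the paper does for $I_2$, and what you yourself describe for $R(u_{\rm h},\nabla v_3)$. The essential point is that the Bernstein loss must be paid at the low output frequency $2^j$, not at $2^k$.

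\textbf{The $\ell^\sigma$ bookkeeping for $T_{\nabla_{\rm h}v_3}u_{\rm h}$.} The high-frequency factor $u_{\rm h}$ is only in $\ell^q$, and in this lemma $q$ and $\sigma$ are independent. Neither $q\leq\sigma$ nor $q\leq 2\sigma$ is assumed; the latter is a hypothesis of Lemma \ref{lemm:nonlin-3} only. So both options you offer are unavailable. The correct mechanism is that the low-frequency factor has strictly negative index $-2/r$. By Young's inequality, $2^{-2j/r}\|\sum_{k\leq j-3}\Delta_k\nabla_{\rm h}v_3\|_{L^{r'}_tL^\infty_x}\leq\sum_{k\leq j-3}2^{-2(j-k)/r}\,2^{-2k/r}\|\Delta_k\nabla_{\rm h}v_3\|_{L^{r'}_tL^\infty_x}$ is already an $\ell^\sigma$ sequence. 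Hence $u_{\rm h}$ is needed only in $\widetilde{L^r}(0,T;\dB_{\infty,\infty}^{-1+2/r})$. This is Lemma \ref{lemm:nonlin}(iii) with $q_1=\infty$, $q_2=\sigma$, exactly as in the paper's bound for $I_3$. With these two corrections your argument coincides with the paper's proof.
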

\begin{proof}
It follows from Lemma \ref{lemm:max-reg} and Corollary \ref{cor:para} that
\begin{align}
    &
    \n{\int_0^t e^{(t-\tau)\Delta}(u_{\rm h}(\tau) \cdot \nabla_{\rm h})v_{\rm h}(\tau) d\tau}_{E_{p,q}(0,T)}
    \\
    &\qquad
    \leq{}
    C
    \n{(u_{\rm h} \cdot \nabla_{\rm h})v_{\rm h}}_{\widetilde{L^1}(0,T;\dB_{p,q}^{\frac{3}{p}-1})}\\
    &
    \qquad
    \leq{}
    C
    \n{u_{\rm h}}_{\widetilde{L^r}(0,T;\dB_{p,q}^{\frac{3}{p}-1+\frac{2}{r}})}
    \n{v_{\rm h}}_{\widetilde{L^
    {r'}}(0,T;\dB_{p,q}^{\frac{3}{p}-1+\frac{2}{r'}})}.
\end{align}
For the second estimate, we decompose the Duhamel integral by the para-product decomposition.
\begin{align}
    \int_0^t e^{(t-\tau)\Delta}(u_{\rm h}(\tau) \cdot \nabla_{\rm h}v_3(\tau)) d\tau
    =
    I_1[u,v](t) + I_2[u,v](t) + I_3[u,v](t),
\end{align}
where we have set
\begin{align}
    I_1[u,v](t)
    :={}&
    \sum_{m=1}^2
    \int_0^t e^{(t-\tau)\Delta} T_{u_m(\tau)}\partial_{x_m}v_3(\tau) d\tau,\\
    I_2[u,v](t)
    :={}&
    \sum_{m=1}^2
    \int_0^t e^{(t-\tau)\Delta} R(u_m(\tau),\partial_{x_m}v_3(\tau)) d\tau,\\
    I_3[u,v](t)
    :={}&
    \sum_{m=1}^2
    \int_0^t e^{(t-\tau)\Delta} T_{\partial_{x_m}v_3(\tau)}u_m(\tau) d\tau.
\end{align}
It follows from Lemmas \ref{lemm:max-reg} and \ref{lemm:nonlin} that 
\begin{align}
    \n{I_1[u,v]}_{E_{\infty,\sigma}(0,T)}
    \leq{}&
    C
    \sum_{m=1}^2
    \n{T_{u_m}\partial_{x_m}v_3}_{\widetilde{L^1}(0,T;\dB_{\infty,\sigma}^{-1})}\\
    \leq{}&
    C
    \sum_{m=1}^2
    \n{u_m}_{\widetilde{L^r}(0,T;\dB_{\infty,\infty}^{-1+\frac{2}{r}})}
    \n{\partial_{x_m}v_3}_{\widetilde{L^{r'}}(0,T;\dB_{\infty,\sigma}^{-2+\frac{2}{r'}})}\\
    \leq{}&
    C
    \n{u_{\rm h}}_{\widetilde{L^r}(0,T;\dB_{p,q}^{\frac{3}{p}-1+\frac{2}{r}})}
    \n{v_3}_{\widetilde{L^{r'}}(0,T;\dB_{\infty,\sigma}^{-1+\frac{2}{r'}})}.
\end{align}
Similarly, we have 
\begin{align}
    \n{I_3[u,v]}_{E_{\infty,\sigma}(0,T)}
    \leq{}
    C
    \n{u_{\rm h}}_{\widetilde{L^r}(0,T;\dB_{\infty,\infty}^{-1+\frac{2}{r}})}
    \n{v_3}_{\widetilde{L^{r'}}(0,T;\dB_{\infty,\sigma}^{-1+\frac{2}{r'}})}.
\end{align}
For the estimate of $I_2$, we see by Lemmas \ref{lemm:max-reg} and \ref{lemm:nonlin} that
\begin{align}
    \n{I_2[u,v]}_{E_{\infty,\sigma}(0,T)}
    \leq{}&
    C
    \n{I_2[u,v]}_{E_{p,\sigma}(0,T)}\\
    \leq{}&
    C
    \sum_{m=1}^2
    \n{R({u_m},\partial_{x_m}v_3)}_{\widetilde{L^1}(0,T;\dB_{p,\sigma}^{\frac{3}{p}-1})}\\
    \leq{}&
    C
    \sum_{m=1}^2
    \n{R({u_m},\partial_{x_m}v_3)}_{\widetilde{L^1}(0,T;\dB_{p,\sigma}^{\frac{3}{p}-1})}\\
    \leq{}&
    C
    \sum_{m=1}^2
    \n{u_m}_{\widetilde{L^r}(0,T;\dB_{p,\infty}^{\frac{3}{p}-1+\frac{2}{r}})}
    \n{\partial_{x_m}v_3}_{\widetilde{L^{r'}}(0,T;\dB_{\infty,\sigma}^{-2+\frac{2}{r'}})}\\
    \leq{}&
    C
    \n{u_{\rm h}}_{\widetilde{L^r}(0,T;\dB_{p,q}^{\frac{3}{p}-1+\frac{2}{r}})}
    \n{v_3}_{\widetilde{L^{r'}}(0,T;\dB_{\infty,\sigma}^{-1+\frac{2}{r'}})}.
\end{align}
The third one is obtained similarly as above.
Thus, we complete the proof.
\end{proof}

\begin{lemm}\label{lemm:nonlin-2}
    Let $p$, $q$, $r$, $\rho$, and $\sigma$ satisfy
    \begin{align}
        \frac{3}{2} < p < 3,
        \quad
        1 \leq q,\sigma \leq \infty,
        \quad
        1 \leq \rho < \frac{2p}{3},
        \quad
        2 < r < \infty. 
    \end{align}
    Then, there exists a positive constant $C$ such that
    \begin{align}
        &
        \n{\int_0^t e^{(t-\tau)\Delta}(u_3(\tau) \partial_{x_3} v_{\rm h}(\tau)) d\tau}_{E_{p,q}(0,T)}
        \\
        &\qquad
        \leq 
        C
        \n{u_3}_{\widetilde{L^{\rho}}(0,T;\dB_{\infty,\sigma}^{-1+\frac{2}{\rho}})}
        \n{v_{\rm h}}_{\widetilde{L^{\rho'}}(0,T;\dB_{p,q}^{\frac{3}{p}-1+\frac{2}{\rho'}})}
        \\
        &\quad
        \qquad
        +
        C
        \n{u_3}_{\widetilde{L^{r}}(0,T;\dB_{\infty,\sigma}^{-1+\frac{2}{r}})}
        \n{v_{\rm h}}_{\widetilde{L^{r'}}(0,T;\dB_{p,q}^{\frac{3}{p}-1+\frac{2}{r'}})}
    \end{align}
    for all $T>0$ and $u=(u_{\rm h},u_3) \in E_{p,q}(0,T)^2 \times E_{\infty,\sigma}(0,T)$.
\end{lemm}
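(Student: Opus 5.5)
Following the proof of Lemma \ref{lemm:nonlin-1}, we first apply the maximal regularity estimate (Lemma \ref{lemm:max-reg}). This reduces the task to bounding $u_3\partial_{x_3}v_{\rm h}$ in $\widetilde{L^1}(0,T;\dB_{p,q}^{3/p-1})$, or in a slightly different space for one of the pieces. We then split the product with Bony's decomposition,
\begin{align}
    u_3\partial_{x_3}v_{\rm h}
    =
    T_{u_3}\partial_{x_3}v_{\rm h}
    +
    R(u_3,\partial_{x_3}v_{\rm h})
    +
    T_{\partial_{x_3}v_{\rm h}}u_3,
\end{align}
and treat the three pieces separately.

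The first two pieces are handled with the exponent pair $(r,r')$.

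For the paraproduct $T_{u_3}\partial_{x_3}v_{\rm h}$, the low frequencies of $u_3$ lie in $\dB_{\infty,\infty}^{-1+2/r}$. This regularity is negative because $r>2$. The paraproduct estimate of Lemma \ref{lemm:nonlin} then places the product in $\widetilde{L^1}(0,T;\dB_{p,q}^{3/p-1})$. The bound is $C\n{u_3}_{\widetilde{L^r}\dB_{\infty,\infty}^{-1+2/r}}\n{v_{\rm h}}_{\widetilde{L^{r'}}\dB_{p,q}^{3/p-1+2/r'}}$, and the regularity indices add up correctly: $(-1+2/r)+(3/p-2+2/r')=3/p-1$.

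For the remainder $R(u_3,\partial_{x_3}v_{\rm h})$, the sum of the regularities is $3/p-1+2/r+2/r'-2=3/p-1$. This is positive since $p<3$, so the remainder estimate of Lemma \ref{lemm:nonlin} (for $L^\infty\times L^p\to L^p$) applies directly, again with the $(r,r')$ norms.

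The difficult piece is $T_{\partial_{x_3}v_{\rm h}}u_3$. Here the high frequencies come from $u_3$, which is only in an $L^\infty$-based space. We cannot place it in $L^p$ with $p<\infty$ by Bernstein, because the low-frequency factor $\partial_{x_3}v_{\rm h}$ has to supply the $L^p$ integrability. Since $S_{j-1}\partial_{x_3}v_{\rm h}\in L^p$ with $p<3$, we use Bernstein to move it into $L^\infty$. This gives regularity $3/p-1+2/\rho'-1-3/p=-2/\rho$, which is negative whenever $\rho<\infty$. So we control the low-frequency factor in $\dB_{\infty,\infty}^{-2/\rho}$ through $\n{v_{\rm h}}_{\widetilde{L^{\rho'}}\dB_{p,q}^{3/p-1+2/\rho'}}$. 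The product $\Delta_j(S_{j-1}\partial_3v_{\rm h}\,\Delta_ju_3)$ then has regularity $-1+2/\rho-2/\rho=-1$ in an $L^\infty$-based space.

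We cannot land in $\dB_{\infty}^{-1}$ for an $E_{p,q}$ estimate directly. Instead we keep the $L^p$ integrability on the low-frequency factor and the $L^\infty$ on $\Delta_ju_3$:
\begin{align}
    \n{S_{j-1}\partial_3 v_{\rm h}\,\Delta_j u_3}_{L^p}
    \leq
    \n{S_{j-1}\partial_3 v_{\rm h}}_{L^p}\n{\Delta_j u_3}_{L^\infty}.
\end{align}
The first factor is a sum over $k\leq j$ of $2^{k(2-3/p-2/\rho')}$ times an $\ell^q$ sequence. This sum converges to $2^{j(2-3/p-2/\rho')}$ exactly when $2-3/p-2/\rho'>0$, that is, when $\rho<2p/3$; this is where the hypothesis on $\rho$ enters. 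The total weight is then $2^{j(3/p-1)}\cdot 2^{j(2-3/p-2/\rho')}\cdot 2^{j(1-2/\rho)}=1$, so the estimate closes in $\widetilde{L^1}(0,T;\dB_{p,\sigma}^{3/p-1})$ after Hölder in time with $(\rho,\rho')$.

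The main obstacle is matching the summability indices $q$ and $\sigma$. If $q$ and $\sigma$ are incompatible, the output summability can be taken as $\ell^\sigma$ in $j$ and embedded into $\ell^q$ only when $\sigma\leq q$. Otherwise, we would keep $\ell^\infty$ in the high-frequency variable and use that $\rho<2p/3$ is a strict inequality to absorb the loss. To do this, we interpolate with the $(r,r')$ norms or use Young's inequality on the convolution in frequency, since Young's inequality gives $\ell^q$ summability from the $v_{\rm h}$ factor. This is the step that produces the second, $\rho$-dependent term in the statement.
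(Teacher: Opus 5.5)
Your proposal is correct and is essentially the paper's proof. You use the same Bony decomposition, the $(r,r')$ pairing with $u_3\in\widetilde{L^r}(0,T;\dB^{-1+2/r}_{\infty,\infty})$ for $T_{u_3}\partial_{x_3}v_{\rm h}$ and $R(u_3,\partial_{x_3}v_{\rm h})$, and the $(\rho,\rho')$ pairing for $T_{\partial_{x_3}v_{\rm h}}u_3$, where the low-frequency factor stays in $L^p$ at regularity $3/p-2/\rho<0$; this is exactly the condition $\rho<2p/3$ the paper invokes.

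The summability issue in your last paragraph is not a real obstacle, and no interpolation is required. Only $\dB^{-1+2/\rho}_{\infty,\infty}$ is needed for $u_3$, and Young's inequality on the geometrically weighted low-frequency sum gives $\ell^q$ summability from $v_{\rm h}$ directly. Note also that the $\rho$-term is the first term of the bound, and it comes simply from this paraproduct, not from a summability fix.
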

\begin{proof}
    It follows from Lemma \ref{lemm:max-reg} that 
    \begin{align}
        \n{\int_0^t e^{(t-\tau)\Delta}(u_3(\tau) \partial_{x_3} u_{\rm h}(\tau)) d\tau}_{E_{p,q}(0,T)}
        \leq{}&
        C
        \n{u_3\partial_{x_3}u_{\rm h}}_{\widetilde{L^1}(0,T;\dB_{p,q}^{\frac{3}{p}-1})}.
    \end{align}
    Here, we decompose the product in the above right-hand side as 
    \begin{align}
        u_3\partial_{x_3}v_{\rm h}
        =
        T_{\partial_{x_3}v_{\rm h}}u_3
        +
        R(u_3,\partial_{x_3}v_{\rm h})
        +
        T_{u_3}\partial_{x_3}v_{\rm h}.
    \end{align}
    For the first term,
    we see that by Lemma \ref{lemm:nonlin} that 
    \begin{align}
        \n{T_{\partial_{x_3}v_{\rm h}}u_3}_{\widetilde{L^1}(0,T;\dB_{p,q}^{\frac{3}{p}-1})}
        \leq{}&
        C
        \n{\partial_{x_3}v_{\rm h}}_{\widetilde{L^{\rho'}}(0,T;\dB_{p,q}^{\frac{3}{p}-2+\frac{2}{\rho'}})}
        \n{u_3}_{\widetilde{L^{\rho}}(0,T;\dB_{\infty,\infty}^{-1+\frac{2}{\rho}})}
        \\
        \leq{}&
        C
        \n{v_{\rm h}}_{\widetilde{L^{\rho'}}(0,T;\dB_{p,q}^{\frac{3}{p}-1+\frac{2}{\rho'}})}
        \n{u_3}_{\widetilde{L^{\rho}}(0,T;\dB_{\infty,\infty}^{-1+\frac{2}{\rho}})},
    \end{align}
    where we have used 
    \begin{align}
        \frac{3}{p}-2+\frac{2}{\rho'}=\frac{3}{p}-\frac{2}{\rho} < 0.
    \end{align}
    For the other terms, Lemma \ref{lemm:nonlin} yields
    \begin{align}
        &
        \n{R(u_3,\partial_{x_3}v_{\rm h})}_{\widetilde{L^1}(0,T;\dB_{p,q}^{\frac{3}{p}-1})}
        +
        \n{T_{u_3}\partial_{x_3}v_{\rm h}}_{\widetilde{L^1}(0,T;\dB_{p,q}^{\frac{3}{p}-1})}
        \\
        &\quad
        \leq{}
        C
        \n{u_3}_{\widetilde{L^{r}}(0,T;\dB_{\infty,\infty}^{-1+\frac{2}{r}})}
        \n{\partial_{x_3}v_{\rm h}}_{\widetilde{L^{r'}}(0,T;\dB_{p,q}^{\frac{3}{p}-2+\frac{2}{r'}})}
        \\
        &\quad
        \leq{}
        C
        \n{u_3}_{\widetilde{L^{r}}(0,T;\dB_{\infty,\infty}^{-1+\frac{2}{r}})}
        \n{v_{\rm h}}_{\widetilde{L^{r'}}(0,T;\dB_{p,q}^{\frac{3}{p}-1+\frac{2}{r'}})},
    \end{align}
    Thus, we complete the proof.
\end{proof}
Next, we consider the pressure $P[u,v]$ defined by 
\begin{align}%\label{eq:decom-p}
        P[u,v]
        ={}&
        \sum_{\ell,m=1}^3
        \partial_{x_{\ell}}\partial_{x_m}(-\Delta)^{-1}
        (u_{\ell}v_m).
\end{align}
Let us decompose $P[u,v]$ as
\begin{align}\label{eq:decom-p}
        P[u,v]
        ={}&
        \sum_{\ell,m=1}^3
        \partial_{x_{\ell}}\partial_{x_m}(-\Delta)^{-1}
        (u_{\ell}v_m)
        \\
        ={}&
        \sum_{\ell,m=1}^2
        \partial_{x_{\ell}}\partial_{x_m}(-\Delta)^{-1}
        (u_{\ell}v_m)
        \\
        &
        +
        \sum_{m=1}^2
        \partial_{x_3}\partial_{x_m}(-\Delta)^{-1}
        (u_3v_m)
        +
        \sum_{m=1}^2
        \partial_{x_3}\partial_{x_m}(-\Delta)^{-1}
        (v_3u_m)
        \\
        &
        +
        \partial_{x_3}^2(-\Delta)^{-1}
        (u_3v_3)\\
        ={}&
        \sum_{\ell,m=1}^2
        \partial_{x_{\ell}}\partial_{x_m}(-\Delta)^{-1}
        (u_{\ell}v_m)
        \\
        &+
        \partial_{x_3}(-\Delta)^{-1}
        (v_{\rm h}\cdot \nabla_{\rm h} u_3)
        +
        \partial_{x_3}(-\Delta)^{-1}
        (u_{\rm h}\cdot \nabla_{\rm h} v_3)
        \\
        =:{}&
        P_1[u,v]
        +
        P_2[u,v].
\end{align}    
In the following lemma, we provide estimates of $P_1[u,v]$ and $P_2[u,v]$.
\begin{lemm}\label{lemm:nonlin-3}
    Let $p$, $q$, $r$, and $\sigma$ satisfy
    \begin{align}
        \frac{3}{2} < p < 3,
        \quad
        1 \leq \sigma \leq \infty,
        \quad
        1 \leq q \leq 2\sigma,
        \quad
        1 \leq \rho < \frac{2p}{3},
        \quad
        2 < r < \infty. 
    \end{align}
    Then, there exists a positive constant $C$ such that
    \begin{align}
        &
        \begin{aligned}
        &\n{\int_0^t e^{(t-\tau)\Delta}\nabla_{\rm h}P_1[u,v](\tau) d\tau}_{E_{p,q}(0,T)}
        \\
        &\quad
        \leq{}
        C
        \n{u_{\rm h}}_{\widetilde{L^{r}}(0,T;\dB_{p,q}^{\frac{3}{p}-1+\frac{2}{r}})}
        \n{v_{\rm h}}_{\widetilde{L^{r'}}(0,T;\dB_{p,q}^{\frac{3}{p}-1+\frac{2}{r'}})}
        \\
        &\qquad 
        +
        C
        \n{u_{\rm h}}_{\widetilde{L^{r'}}(0,T;\dB_{p,q}^{\frac{3}{p}-1+\frac{2}{r'}})}
        \n{v_{\rm h}}_{\widetilde{L^{r}}(0,T;\dB_{p,q}^{\frac{3}{p}-1+\frac{2}{r}})},
        \end{aligned}
        \\
        &
        \begin{aligned}
        &\n{\int_0^t e^{(t-\tau)\Delta}\partial_{x_3}P_1[u,v](\tau) d\tau}_{E_{\infty,\sigma}(0,T)}
        \\
        &\quad
        \leq{}
        C
        \n{u_{\rm h}}_{\widetilde{L^{r}}(0,T;\dB_{p,q}^{\frac{3}{p}-1+\frac{2}{r}})}
        \n{v_{\rm h}}_{\widetilde{L^{r'}}(0,T;\dB_{p,q}^{\frac{3}{p}-1+\frac{2}{r'}})}
        \\
        &\qquad
        +
        C
        \n{u_{\rm h}}_{\widetilde{L^{r'}}(0,T;\dB_{p,q}^{\frac{3}{p}-1+\frac{2}{r'}})}
        \n{v_{\rm h}}_{\widetilde{L^{r}}(0,T;\dB_{p,q}^{\frac{3}{p}-1+\frac{2}{r}})},
        \end{aligned}
    \end{align}
    and 
    \begin{align}
        &
        \begin{aligned}
        &
        \n{\int_0^t e^{(t-\tau)\Delta}\nabla_{\rm h}P_2[u,v](\tau) d\tau}_{E_{p,q}(0,T)}
        \\
        &\quad
        \leq{}
        C
        \n{v_3}_{\widetilde{L^{r}}(0,T;\dB_{\infty,\sigma}^{-1+\frac{2}{r}})}
        \n{u_{\rm h}}_{\widetilde{L^{r'}}(0,T;\dB_{p,q}^{\frac{3}{p}-1+\frac{2}{r'}})}\\
        &\qquad
        +
        C
        \n{u_{\rm h}}_{\widetilde{L^{r}}(0,T;\dB_{p,q}^{\frac{3}{p}-1+\frac{2}{r}})}
        \n{v_{\rm h}}_{\widetilde{L^{r'}}(0,T;\dB_{p,q}^{\frac{3}{p}-1+\frac{2}{r'}})}\\
        &\qquad
        +
        C
        \n{u_{\rm h}}_{\widetilde{L^{\rho'}}(0,T;\dB_{p,q}^{\frac{3}{p}-1+\frac{2}{\rho'}})}
        \n{v_3}_{\widetilde{L^{\rho}}(0,T;\dB_{\infty,\sigma}^{-1+\frac{2}{\rho}})},
        \end{aligned}
        \\
        &
        \begin{aligned}
        &
        \n{\int_0^t e^{(t-\tau)\Delta}\partial_{x_3}P_2[u,v](\tau) d\tau}_{E_{\infty,\sigma}(0,T)}
        \\
        &\quad
        \leq{}
        C
        \n{v_3}_{\widetilde{L^{r}}(0,T;\dB_{\infty,\sigma}^{-1+\frac{2}{r}})}
        \n{u_{\rm h}}_{\widetilde{L^{r'}}(0,T;\dB_{p,\infty}^{\frac{3}{p}-1+\frac{2}{r}})}\\
        &\qquad
        +
        C
        \n{u_{\rm h}}_{\widetilde{L^{\rho'}}(0,T;\dB_{p,q}^{\frac{3}{p}-1+\frac{2}{\rho'}})}
        \n{v_3}_{\widetilde{L^{\rho}}(0,T;\dB_{\infty,\sigma}^{-1+\frac{2}{\rho}})},
        \end{aligned}
    \end{align}
    for all $T>0$ and $u=(u_{\rm h},u_3) \in E_{p,q}(0,T)^2 \times E_{\infty,\sigma}(0,T)$.
\end{lemm}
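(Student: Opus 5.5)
Every bound in the statement will be obtained the same way: first apply the maximal regularity estimate (Lemma \ref{lemm:max-reg}) to reduce each Duhamel term to an $\widetilde{L^1}$ norm of its forcing, namely $\widetilde{L^1}(0,T;\dB_{p,q}^{3/p-1})$ for the horizontal components and $\widetilde{L^1}(0,T;\dB_{\infty,\sigma}^{-1})$ for the vertical one. Next, use that the operators $\partial_{x_\ell}\partial_{x_m}(-\Delta)^{-1}$ and $\nabla(-\Delta)^{-1}\partial_{x_3}$ are homogeneous Fourier multipliers of order $0$. They are therefore bounded on every dyadic block, uniformly in $j$, and hence on all the Chemin--Lerner Besov spaces. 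This removes the pressure structure and leaves product estimates.

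\textbf{$P_1$.} Here $\nabla P_1[u,v]=\sum_{\ell,m\le 2}\nabla\partial_{x_\ell}\partial_{x_m}(-\Delta)^{-1}(u_\ell v_m)$, so it suffices to bound $u_\ell v_m$ in $\widetilde{L^1}(0,T;\dB_{p,q}^{3/p})$.
\begin{itemize}
\item Split by Bony's decomposition: $u_\ell v_m=T_{u_\ell}v_m+T_{v_m}u_\ell+R(u_\ell,v_m)$.
\item Apply Lemma \ref{lemm:nonlin} (or Corollary \ref{cor:para}). Both $\widetilde{L^r}\dB_{p,q}^{3/p-1+2/r}$ and $\widetilde{L^{r'}}\dB_{p,q}^{3/p-1+2/r'}$ embed into negative-regularity $L^\infty$-type spaces, and the sum of regularities $3/p-2+2=3/p$ is positive. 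The two paraproducts give the two symmetric terms of the right-hand side, and the remainder is handled because the total regularity is positive.
\item For $\partial_{x_3}P_1$ in $E_{\infty,\sigma}$, use instead the embedding $E_{p,\sigma}\hookrightarrow E_{\infty,\sigma}$ (as for $I_2$ in Lemma \ref{lemm:nonlin-1}). Then control $u_\ell v_m$ in $\widetilde{L^1}\dB_{p,\sigma}^{3/p}$ through the $\dB_{p,q}$ norms.
\item This last step is where $q\le 2\sigma$ enters. The remainder and paraproducts in $\ell^\sigma$ summation over blocks come from products of two $\ell^q$ sequences, which lie in $\ell^{q/2}\subset\ell^\sigma$.
\end{itemize}

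\textbf{$P_2$.} By the decomposition \eqref{eq:decom-p}, $\nabla P_2=\nabla\partial_{x_3}(-\Delta)^{-1}(v_{\rm h}\cdot\nabla_{\rm h}u_3+u_{\rm h}\cdot\nabla_{\rm h}v_3)$, and the multiplier $\nabla\partial_{x_3}(-\Delta)^{-1}$ has order $0$.
\begin{itemize}
\item The term $u_{\rm h}\cdot\nabla_{\rm h}v_3$ is estimated in $\widetilde{L^1}\dB_{p,q}^{3/p-1}$ exactly as $u_3\partial_{x_3}v_{\rm h}$ in Lemma \ref{lemm:nonlin-2}.
\item In the paraproduct $T_{\nabla_{\rm h}v_3}u_{\rm h}$ the low-frequency factor is $v_3$. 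Putting $v_3$ in $L^\rho$, with $\rho<2p/3$ so that $3/p-2/\rho<0$, gives the $\rho$-term.
\item $T_{u_{\rm h}}\nabla_{\rm h}v_3$ would naively require $v_3\in \dB_{p,\cdot}$, which is unavailable. Instead, rewrite it as $\nabla_{\rm h}\cdot T_{u_{\rm h}}v_3-T_{\nabla_{\rm h}\cdot u_{\rm h}}v_3$ and use $\nabla_{\rm h}\cdot u_{\rm h}=-\partial_{x_3}u_3$. This converts it into a bilinear term in $u_{\rm h}$ and $v_3$ placed in the appropriate Lebesgue index.
\item The remaining terms are handled similarly and account for the $\widetilde{L^r}\times\widetilde{L^{r'}}$ contributions.
\item The term $v_{\rm h}\cdot\nabla_{\rm h}u_3$ is treated symmetrically.
\end{itemize}
For the $E_{\infty,\sigma}$ estimate of $\partial_{x_3}P_2$, the high–high and $T_{u_{\rm h}}$ parts are bounded directly in $\dB_{\infty,\sigma}^{-1}$, using $\dB_{p,\infty}^{3/p-1+2/r}\hookrightarrow \dB_{\infty,\infty}^{-1+2/r}$.

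\textbf{Main obstacle.} The hardest step is the $P_2$ paraproduct in which $v_3$ sits at high frequency while the target is the $L^p$-based space $\dB_{p,q}^{3/p-1}$. This term cannot be absorbed by the $L^\infty$ information on $v_3$ alone. My first attempt will be to move the derivative via the divergence-free relation as above and then use Bernstein's inequality together with $p>3/2$. If that fails, I will instead exploit the $\partial_{x_3}(-\Delta)^{-1}$ gain of one derivative, bounding $v_{\rm h}\cdot\nabla_{\rm h}u_3$ in the weaker space $\dB_{p,q}^{3/p-2}$ and requiring only $3/p-2<0$, i.e. $p>3/2$.
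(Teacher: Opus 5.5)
Your $P_1$ argument is essentially the paper's. There is, however, a genuine gap at exactly the term you flag: $T_{u_m}\partial_{x_m}v_3$ in the $\widetilde{L^1}(0,T;\dB^{3/p-1}_{p,q})$ bound for $\nabla_{\rm h}P_2$.

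First, $u_{\rm h}\cdot\nabla_{\rm h}v_3$ cannot be treated ``exactly as'' $u_3\partial_{x_3}v_{\rm h}$ in Lemma \ref{lemm:nonlin-2}. There the derivative sits on the $L^p$-based factor, which is what makes its low-frequency regularity $3/p-2/\rho$ negative. Here $v_3$ carries only $L^\infty$ information, so the $L^p$ integrability of the output must come from $S_{k-3}u_{\rm h}$ in $L^p$. But the $L^p$-regularity of $u_{\rm h}$ in every available norm, $3/p-1+2/\alpha$ with $\alpha\in[1,\infty]$, is positive for $p<3$, so the low-frequency sum diverges. Bernstein only raises integrability and cannot help.

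Your rewriting $T_{u_{\rm h}}\cdot\nabla_{\rm h}v_3=\nabla_{\rm h}\cdot T_{u_{\rm h}}v_3-T_{\nabla_{\rm h}\cdot u_{\rm h}}v_3$ does not change this. The piece $\nabla_{\rm h}\cdot T_{u_{\rm h}}v_3$ has the same low-frequency factor $u_{\rm h}$ and needs $T_{u_{\rm h}}v_3\in\widetilde{L^1}(0,T;\dB^{3/p}_{p,q})$, which fails for the same reason. Replacing $\nabla_{\rm h}\cdot u_{\rm h}$ by $-\partial_{x_3}u_3$ turns the other piece into a product of two $L^\infty$-based functions, which cannot land in an $L^p$-based space at all.

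The fallback miscounts derivatives. The multiplier acting on the product in $\nabla_{\rm h}P_2$ is $\nabla_{\rm h}\partial_{x_3}(-\Delta)^{-1}$, which has order zero. So there is no spare derivative, and the product must still lie in $\dB^{3/p-1}_{p,q}$. Bounding $v_{\rm h}\cdot\nabla_{\rm h}u_3$ in $\widetilde{L^1}(0,T;\dB^{3/p-2}_{p,q})$ is not even scaling-consistent with critical norms on the right-hand side.

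The missing idea is to let the $\partial_{x_3}$ of the multiplier act inside the paraproduct, and to use the divergence-free condition on $v$ rather than on $u$:
\[
\partial_{x_3}(-\Delta)^{-1}T_{u_m}\partial_{x_m}v_3
=(-\Delta)^{-1}T_{\partial_{x_3}u_m}\partial_{x_m}v_3
-(-\Delta)^{-1}T_{u_m}\partial_{x_m}(\nabla_{\rm h}\cdot v_{\rm h}).
\]
After applying $\nabla_{\rm h}$, the multiplier $\nabla_{\rm h}(-\Delta)^{-1}$ has order $-1$, so both pieces are needed only in $\widetilde{L^1}(0,T;\dB^{3/p-2}_{p,q})$.

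In the second piece the high frequency now sits on $v_{\rm h}$, so the output is $L^p$-based, and $u_m$ can be measured in $\widetilde{L^r}(0,T;\dB^{-1+2/r}_{\infty,\infty})$. This is the source of the middle term $\|u_{\rm h}\|_{\widetilde{L^r}}\|v_{\rm h}\|_{\widetilde{L^{r'}}}$ in the statement, which your scheme has no way to produce.

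In the first piece the low-frequency factor $\partial_{x_3}u_m\in\widetilde{L^{\rho'}}(0,T;\dB^{3/p-2/\rho}_{p,q})$ has negative regularity precisely because $\rho<2p/3$. This is where the $\rho$-term comes from. It does not come from $T_{\partial_{x_m}v_3}u_m$: that term's low-frequency factor is $L^\infty$-based, and the paper bounds it, together with $R(u_m,\partial_{x_m}v_3)$, using the $(r,r')$ pair.

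Two smaller corrections:
\begin{itemize}
\item In $P_1$, $\widetilde{L^{r'}}(0,T;\dB^{3/p-1+2/r'}_{p,q})$ embeds only into an $L^\infty$-based space of positive regularity $1-2/r$. That is why each paraproduct must put its low-frequency factor in the $\widetilde{L^r}$ norm, and why two mirrored terms appear.
\item For $\partial_{x_3}P_2$ in $E_{\infty,\sigma}$, the remainder $R(u_m,\partial_{x_m}v_3)$ cannot be bounded in $\dB^{-1}_{\infty,\sigma}$ after embedding $u_{\rm h}$ into an $L^\infty$-based space, since the total regularity would be $-1<0$. It must be estimated in $\dB^{3/p-1}_{p,\sigma}$, where the total regularity is $3/p-1>0$, and then embedded, as the paper does.
\end{itemize}
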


\begin{proof}
    First, we consider the estimate of $P_1[u,v]$.
    By Lemmas \ref{lemm:max-reg} and \ref{lemm:nonlin}, there holds
    \begin{align}
        &\n{\int_0^t e^{(t-\tau)\Delta}\nabla_{\rm h}P_1[u,v](\tau) d\tau}_{E_{p,q}(0,T)}
        \\
        &\quad
        \leq{}
        C
        \sum_{\ell,m=1}^2 
        \n{u_\ell v_m}_{\widetilde{L^1}(0,T;\dB_{p,q}^{\frac{3}{p}})}
        \\
        &\quad
        \leq{}
        C
        \n{u_{\rm h}}_{\widetilde{L^{r}}(0,T;\dB_{p,q}^{\frac{3}{p}-1+\frac{2}{r}})}
        \n{v_{\rm h}}_{\widetilde{L^{r'}}(0,T;\dB_{p,q}^{\frac{3}{p}-1+\frac{2}{r'}})}
        \\
        &\qquad 
        \quad
        +
        C
        \n{u_{\rm h}}_{\widetilde{L^{r'}}(0,T;\dB_{p,q}^{\frac{3}{p}-1+\frac{2}{r'}})}
        \n{v_{\rm h}}_{\widetilde{L^{r}}(0,T;\dB_{p,q}^{\frac{3}{p}-1+\frac{2}{r}})}
    \end{align}
    and
    \begin{align}
        &\n{\int_0^t e^{(t-\tau)\Delta}\partial_{x_3}P_1[u,v](\tau) d\tau}_{E_{\infty,\sigma}(0,T)}
        \\
        &\quad
        \leq{}
        C
        \sum_{\ell,m=1}^2 
        \n{u_\ell u_m}_{\widetilde{L^1}(0,T;\dB_{p,\sigma}^{\frac{3}{p}})}
        \\
        &\quad
        \leq{}
        C
        \n{u_{\rm h}}_{\widetilde{L^{r}}(0,T;\dB_{p,2\sigma}^{\frac{3}{p}-1+\frac{2}{r}})}
        \n{v_{\rm h}}_{\widetilde{L^{r'}}(0,T;\dB_{p,2\sigma}^{\frac{3}{p}-1+\frac{2}{r'}})}
        \\
        &\qquad
        +
        C
        \n{u_{\rm h}}_{\widetilde{L^{r'}}(0,T;\dB_{p,2\sigma}^{\frac{3}{p}-1+\frac{2}{r'}})}
        \n{v_{\rm h}}_{\widetilde{L^{r}}(0,T;\dB_{p,2\sigma}^{\frac{3}{p}-1+\frac{2}{r}})}
        \\
        &\quad
        \leq{}
        C
        \n{u_{\rm h}}_{\widetilde{L^{r}}(0,T;\dB_{p,q}^{\frac{3}{p}-1+\frac{2}{r}})}
        \n{v_{\rm h}}_{\widetilde{L^{r'}}(0,T;\dB_{p,q}^{\frac{3}{p}-1+\frac{2}{r'}})}
        \\
        &\qquad
        +
        C
        \n{u_{\rm h}}_{\widetilde{L^{r'}}(0,T;\dB_{p,q}^{\frac{3}{p}-1+\frac{2}{r'}})}
        \n{v_{\rm h}}_{\widetilde{L^{r}}(0,T;\dB_{p,q}^{\frac{3}{p}-1+\frac{2}{r}})},
    \end{align}
    which provides the first estimate.

    For the estimate of $P_2[u,v]$, we may decompose it as
    \begin{align}
        P_2[u,v]
        ={}&
        \sum_{m=1}^2
        \partial_{x_3}(-\Delta)^{-1}
        R(u_m,\partial_{x_m}u_3)
        +
        \sum_{m=1}^2
        \partial_{x_3}(-\Delta)^{-1}
        T_{\partial_{x_m}v_3}u_m
        \\
        &
        +
        \sum_{m=1}^2
        \partial_{x_3}
        (-\Delta)^{-1}
        T_{u_m}\partial_{x_m}v_3\label{P2-1}
        \\
        ={}&
        \sum_{m=1}^2
        \partial_{x_3}(-\Delta)^{-1}
        R(u_m,\partial_{x_m}v_3)
        +
        \sum_{m=1}^2
        \partial_{x_3}(-\Delta)^{-1}
        T_{\partial_{x_m}v_3}u_m  
        \\
        &
        +
        \sum_{m=1}^2
        (-\Delta)^{-1}
        T_{u_m}\partial_{x_m}(-\nabla_{\rm h} \cdot v_{\rm h})
        +
        \sum_{m=1}^2
        (-\Delta)^{-1}
        T_{\partial_{x_3}u_m}\partial_{x_m}v_3. \label{P2-2}
    \end{align}
    Thus, we have by \eqref{P2-2} that
    \begin{align}
        &\n{\int_0^t e^{(t-\tau)\Delta}\nabla_{\rm h}P_2[u,v](\tau) d\tau}_{E_{p,q}(0,T)}
        \\
        &\quad
        \leq{}
        C\sum_{m=1}^2
        \sp{
        \n{R(u_m,\partial_{x_m}v_3)}_{\widetilde{L^1}(0,T;\dB_{p,q}^{\frac{3}{p}-1})}
        +
        \n{T_{\partial_{x_m}v_3}u_m}_{\widetilde{L^1}(0,T;\dB_{p,q}^{\frac{3}{p}-1})}
        }\\
        &\qquad
        +
        C\sum_{m=1}^2\n{T_{u_m}\partial_{x_m}(-\nabla_{\rm h}\cdot v_{\rm h})}_{\widetilde{L^1}(0,T;\dB_{p,q}^{\frac{3}{p}-2})}
        \\
        &\qquad
        +
        C\sum_{m=1}^2\n{T_{\partial_{x_3}u_m}\partial_{x_m}v_3}_{\widetilde{L^1}(0,T;\dB_{p,q}^{\frac{3}{p}-2})}
        \\
        &\quad
        \leq{}
        C
        \n{\nabla_{\rm h}v_3}_{\widetilde{L^{r}}(0,T;\dB_{\infty,\infty}^{-2+\frac{2}{r}})}
        \n{u_{\rm h}}_{\widetilde{L^{r'}}(0,T;\dB_{p,q}^{\frac{3}{p}-1+\frac{2}{r'}})}\\
        &\qquad
        +
        C
        \n{u_{\rm h}}_{\widetilde{L^{r}}(0,T;\dB_{\infty,\infty}^{-1+\frac{2}{r}})}
        \n{\nabla_{\rm h}^2v_{\rm h}}_{\widetilde{L^{r'}}(0,T;\dB_{p,q}^{\frac{3}{p}-3+\frac{2}{r'}})}\\
        &\qquad
        +
        C
        \n{\partial_{x_3}u_{\rm h}}_{\widetilde{L^{\rho'}}(0,T;\dB_{p,q}^{\frac{3}{p}-2+\frac{2}{\rho'}})}
        \n{\nabla_{\rm h}v_3}_{\widetilde{L^{\rho}}(0,T;\dB_{\infty,\infty}^{-2+\frac{2}{\rho}})}
        \\
        &\quad
        \leq{}
        C
        \n{v_3}_{\widetilde{L^{r}}(0,T;\dB_{\infty,\sigma}^{-1+\frac{2}{r}})}
        \n{u_{\rm h}}_{\widetilde{L^{r'}}(0,T;\dB_{p,q}^{\frac{3}{p}-1+\frac{2}{r'}})}\\
        &\qquad
        +
        C
        \n{u_{\rm h}}_{\widetilde{L^{r}}(0,T;\dB_{p,q}^{\frac{3}{p}-1+\frac{2}{r}})}
        \n{v_{\rm h}}_{\widetilde{L^{r'}}(0,T;\dB_{p,q}^{\frac{3}{p}-1+\frac{2}{r'}})}\\
        &\qquad
        +
        C
        \n{u_{\rm h}}_{\widetilde{L^{\rho'}}(0,T;\dB_{p,q}^{\frac{3}{p}-1+\frac{2}{\rho'}})}
        \n{v_3}_{\widetilde{L^{\rho}}(0,T;\dB_{\infty,\sigma}^{-1+\frac{2}{\rho}})},
    \end{align}
    and also see by \eqref{P2-1} that
    \begin{align}
        &
        \n{\int_0^t e^{(t-\tau)\Delta}\partial_{x_3}P_2[u,v](\tau) d\tau}_{E_{\infty,\sigma}(0,T)}
        \\
        &\quad
        \leq{}
        C\sum_{m=1}^2
        \n{R(u_m,\partial_{x_m}v_3)}_{\widetilde{L^1}(0,T;\dB_{p,\sigma}^{\frac{3}{p}-1})}
        \\
        &\qquad
        +
        C\sum_{m=1}^2
        \n{T_{\partial_{x_m}v_3}u_m}_{\widetilde{L^1}(0,T;\dB_{\infty,\sigma}^{-1})}
        +
        C\sum_{m=1}^2
        \n{T_{u_m}\partial_{x_m}v_3}_{\widetilde{L^1}(0,T;\dB_{\infty,\sigma}^{-1})}
        \\
        &\quad
        \leq{}
        C
        \n{\nabla_{\rm h}u_3}_{\widetilde{L^{r}}(0,T;\dB_{\infty,\sigma}^{-2+\frac{2}{r}})}
        \n{u_{\rm h}}_{\widetilde{L^{r'}}(0,T;\dB_{p,\infty}^{\frac{3}{p}-1+\frac{2}{r'}})}\\
        &\qquad
        +
        C
        \n{\nabla_{\rm h}u_3}_{\widetilde{L^{r}}(0,T;\dB_{\infty,\sigma}^{-2+\frac{2}{r}})}
        \n{u_{\rm h}}_{\widetilde{L^{r'}}(0,T;\dB_{\infty,\infty}^{-1+\frac{2}{r'}})}\\
        &\qquad
        +
        C
        \n{u_{\rm h}}_{\widetilde{L^{\rho'}}(0,T;\dB_{\infty,\infty}^{-1+\frac{2}{\rho'}})}
        \n{\nabla_{\rm h}u_3}_{\widetilde{L^{\rho}}(0,T;\dB_{\infty,\sigma}^{-2+\frac{2}{\rho}})}
        \\
        &\quad
        \leq{}
        C
        \n{v_3}_{\widetilde{L^{r}}(0,T;\dB_{\infty,\sigma}^{-1+\frac{2}{r}})}
        \n{u_{\rm h}}_{\widetilde{L^{r'}}(0,T;\dB_{p,\infty}^{\frac{3}{p}-1+\frac{2}{r}})}\\
        &\qquad
        +
        C
        \n{u_{\rm h}}_{\widetilde{L^{\rho'}}(0,T;\dB_{p,q}^{\frac{3}{p}-1+\frac{2}{\rho'}})}
        \n{v_3}_{\widetilde{L^{\rho}}(0,T;\dB_{\infty,\sigma}^{-1+\frac{2}{\rho}})},
    \end{align}
    which completes the proof.
\end{proof}

\section{Proof of Theorem \ref{thm:large}}\label{sec:pf}
In this section, we present the proof of Theorem \ref{thm:large}.
To begin with, we mention the notion of the solutions. 
Applying the Duhamel principle to \eqref{eq:u_h}-\eqref{eq:u_3} and the standard formula for the pressure of homogeneous incompressible flow, we may rewrite \eqref{eq:NS-1} as
\begin{align}
        u_{\rm h}(t)
        ={}&
        e^{t \Delta}a_{\rm h}
        -
        \int_0^t
        e^{(t-\tau)\Delta}
        \lp{
        (u_{\rm h} \cdot \nabla_{\rm h})u_{\rm h}
        +
        u_3 \partial_{x_3}u_{\rm h}
        +
        \nabla_{\rm h}P
        }(\tau)
        d\tau,
        \\
        u_3(t)
        ={}&
        e^{t \Delta}a_3
        -
        \int_0^t
        e^{(t-\tau)\Delta}
        \lp{
        u_{\rm h} \cdot \nabla_{\rm h}u_3
        -
        u_3
        \nabla_{\rm h} \cdot u_{\rm h} 
        +
        \partial_{x_3}P
        }(\tau)
        d\tau,
        \\
        P
        ={}&
        P[u,u]
        =
        \sum_{\ell,m=1}^3
        \partial_{x_{\ell}}\partial_{x_m}(-\Delta)^{-1}
        (u_{\ell}u_m)
\end{align}
with $\div u = 0$.
\begin{lemm}\label{lemm:LWP}
    Let $3/2 < p <3$, $1 \leq \sigma < \infty$, $1 \leq q \leq 2\sigma$.
    Let $a=(a_{\rm h}, a_3) \in \dB_{p,q}^{\frac{3}{p}-1}(\mathbb{R}^3)^2 \times \dB_{\infty,\sigma}^{-1}(\mathbb{R}^3)$ be a given solenoidal vector field.
    Then, there exists a time $T_0=T_0(p,q,\sigma,a)$ such that the system possesses a unique local solution
    \begin{align}
    &
    u_{\rm h} \in \widetilde{C}\sp{[0,T_0) ; \dB_{p,q}^{\frac{3}{p}-1}(\mathbb{R}^3)}^2 \cap \widetilde{L^1}\sp{0,T_0 ; \dB_{p,q}^{\frac{3}{p}+1}(\mathbb{R}^3)}^2, 
    \\
    &
    u_3 \in \widetilde{C}\sp{[0,T_0) ; \dB_{\infty,\sigma}^{-1}(\mathbb{R}^3)} \cap \widetilde{L^1}\sp{0,T_0 ; \dB_{\infty,\sigma}^1(\mathbb{R}^3)}.
    \end{align}
\end{lemm}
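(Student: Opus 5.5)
We construct the local solution by a Banach fixed point argument in the Duhamel formulation written just before the statement. Fix $r$ with $2<r<\infty$ and $\rho$ with $1<\rho<2p/3$; note $\rho\leq r$ is not needed, but we need $\sigma\leq r$ and $\sigma\leq\rho$ only if we want to invoke Lemma \ref{lemm:time}, which we do not use here. For $T>0$ we work in the complete metric space of pairs $u=(u_{\rm h},u_3)\in E_{p,q}(0,T)^2\times E_{\infty,\sigma}(0,T)$, $\div u=0$, and define the map $\Phi(u)$ as the right-hand side of the Duhamel system, with $P=P[u,u]=P_1[u,u]+P_2[u,u]$ as in \eqref{eq:decom-p}. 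The nonlinear estimates of Lemmas \ref{lemm:nonlin-1}, \ref{lemm:nonlin-2}, and \ref{lemm:nonlin-3} control every bilinear term of $\Phi(u)-\Phi(v)$ by products of the auxiliary norms
\begin{align}
    X_s(T):=\n{u_{\rm h}}_{\widetilde{L^{s}}(0,T;\dB_{p,q}^{\frac{3}{p}-1+\frac{2}{s}})}+\n{u_3}_{\widetilde{L^{s}}(0,T;\dB_{\infty,\sigma}^{-1+\frac{2}{s}})},\qquad s\in\{r,r',\rho,\rho'\},
\end{align}
each of which is bounded by interpolation (H\"older in time between $\widetilde{L^\infty}$ and $\widetilde{L^1}$) by $C\n{u}_{E(0,T)}$. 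Bilinearity then gives Lipschitz estimates for differences in the same form.

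The key point for a \emph{large-data} local result is that the quadratic terms must be small for small $T$, although $E(0,T)$ norms themselves need not be small. The plan is the standard splitting: write $u=e^{t\Delta}a+w$. Since $q,\sigma<\infty$ (here we use $\sigma<\infty$, and $q\leq 2\sigma<\infty$), the free parts satisfy $\n{e^{t\Delta}a_{\rm h}}_{\widetilde{L^s}(0,T;\dB_{p,q}^{3/p-1+2/s})}\to0$ and $\n{e^{t\Delta}a_3}_{\widetilde{L^s}(0,T;\dB_{\infty,\sigma}^{-1+2/s})}\to0$ as $T\to0$ for every finite $s$. This follows from $2^{2j/s}\n{e^{t\Delta}\Delta_j f}_{L^s(0,T;L^p)}\leq C\min\{1,(2^{2j}T)^{1/s}\}\n{\Delta_jf}_{L^p}$ and dominated convergence in the $\ell^q$ or $\ell^\sigma$ sum. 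Choose $T_0$ so that all these free norms are at most a small $\delta$. Then seek $w$ in a ball of radius $\delta$ in $E(0,T_0)$. Every quadratic term is bounded by $C(\delta+\n{w}_E)^2\leq 4C\delta^2$, so $\Phi$ maps the ball into itself and is a contraction once $\delta$ is small. The divergence-free condition is preserved because $\Phi$ is equivalent to the Leray-projected form. The rewriting $\partial_{x_3}u_3=-\nabla_{\rm h}\cdot u_{\rm h}$ in \eqref{eq:u_3} and in \eqref{P2-2} is legitimate for solenoidal fields.

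The main obstacle is checking that every term in $\Phi$ is genuinely covered by the three lemmas, with at least one factor measured in a small $\widetilde{L^s}$ norm with $s<\infty$, rather than only through $\widetilde{L^\infty}$ or $\widetilde{L^1}$ norms. In particular, the estimates for $u_3\partial_{x_3}u_{\rm h}$ and $P_2$ involve the pair $(\rho,\rho')$, and $\rho'$ may be large but is finite, so smallness is still available. The pairing term $\n{u_{\rm h}}_{\widetilde{L^{r'}}(\dB_{p,\infty}^{3/p-1+2/r})}$ in Lemma \ref{lemm:nonlin-3} should be read as a typo for the exponent $2/r'$. Continuity in time, $\widetilde{C}$, follows from the $\widetilde{L^1}$ bound on the forcing and Lemma \ref{lemm:max-reg}. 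Uniqueness in the full class follows by applying the same difference estimate on short intervals where the auxiliary norms of both solutions are small, which is again possible because $q,\sigma<\infty$ make these norms absolutely continuous in $T$.
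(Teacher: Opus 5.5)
Your proposal is correct and follows the route the paper has in mind: the paper only says the lemma follows from the standard contraction argument using Lemmas \ref{lemm:nonlin-1}--\ref{lemm:nonlin-3}. Your splitting $u=e^{t\Delta}a+w$, with smallness coming from the fact that the $\widetilde{L^s}$ norms of the free part vanish as $T\to0$ for every finite $s\in\{r,r',\rho,\rho'\}$ (because $q\le2\sigma<\infty$), is the right way to handle large data. You also correctly read the $\frac{2}{r}$ in the last estimate of Lemma \ref{lemm:nonlin-3} as a typo for $\frac{2}{r'}$. One point is loose: time continuity in $\widetilde{C}$ does not follow from Lemma \ref{lemm:max-reg} alone, but from the routine dyadic-block argument, which again uses $q,\sigma<\infty$.
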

Lemma \ref{lemm:LWP} may be proved by the standard contraction mapping argument via the nonlinear estimates given by Lemmas \ref{lemm:nonlin-1}, \ref{lemm:nonlin-2}, and \ref{lemm:nonlin-3}.
Thus, we omit the precise proof.

Next, we consider the a priori estimate of the solution $u$ constructed in the above lemma.
\begin{lemm}\label{lemm:a-priori}
    Let $p$, $q$, $r$, $\theta$, and $\sigma$ satisfy the assumption of Theorem \ref{thm:large}.
    Then, there exists a positive constant $C_0=C_0(p,q,r,\theta,\sigma)$ such that
    \begin{align}
        &
        \begin{aligned}
        \n{u_{\rm h}}_{E_{p,q}(0,T)}
        \leq{}
        C_0
        \n{a_{\rm h}}_{\dB_{p,q}^{\frac{3}{p}-1}}
        &+
        C_0
        \n{u_{\rm h}}_{E_{p,q}(0,T)}^2
        \\
        &
        +
        C_0
        \n{u_{\rm h}}_{E_{p,q}(0,T)}
        \n{u_3}_{E_{\infty,\sigma}(0,T)}^{1-\theta}
        \n{u_3}_{\widetilde{L^{r}}(0,T;\dB_{\infty,\sigma}^{-1+\frac{2}{r}})}^{\theta}
        \end{aligned}
        \\
        &
        \begin{aligned}
        \n{u_3}_{E_{\infty,\sigma}(0,T)}
        \leq{}
        C_0
        \n{a_3}_{\dB_{\infty,\sigma}^{-1}}
        +
        C_0
        \n{u_{\rm h}}_{E_{p,q}(0,T)}^2
        +
        C_0
        \n{u_{\rm h}}_{E_{p,q}(0,T)}
        \n{u_3}_{E_{\infty,\sigma}(0,T)}
        \end{aligned}
    \end{align}
    for all $0 < T \leq T_{\rm max}$, where $T_{\rm max}$ denotes the maximal existence time.
\end{lemm}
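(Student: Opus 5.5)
We plan to apply the linear estimate (Lemma \ref{lemm:max-reg}) to the Duhamel formulation of the system, and then bound every nonlinear term by Lemmas \ref{lemm:nonlin-1}, \ref{lemm:nonlin-2} and \ref{lemm:nonlin-3} with $u=v$. The resulting space--time norms of intermediate regularity are controlled by interpolating between the two endpoints of $E$. The basic interpolation facts we use are, for $2\le s\le\infty$ (and also for $s=r'$ or $\rho'$ in $[1,2]$),
\begin{align}
\n{f}_{\widetilde{L^s}(0,T;\dB_{p,q}^{\frac3p-1+\frac2s})}\le \n{f}_{E_{p,q}(0,T)},
\qquad
\n{g}_{\widetilde{L^s}(0,T;\dB_{\infty,\sigma}^{-1+\frac2s})}\le \n{g}_{E_{\infty,\sigma}(0,T)}.
\end{align}
These follow from Hölder's inequality in time at each dyadic block, $\n{\Delta_j f}_{L^s}\le \n{\Delta_j f}_{L^\infty}^{1-1/s}\n{\Delta_j f}_{L^1}^{1/s}$, followed by Hölder in $j$. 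We also use the embedding $\dB_{p,q}^{3/p-1}\hookrightarrow\dB_{\infty,\infty}^{-1}$, which the earlier lemmas already employ.

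\textbf{The $u_3$ estimate.} All terms in the $u_3$ equation are either quadratic in $u_{\rm h}$ or bilinear in $(u_{\rm h},u_3)$: namely $u_{\rm h}\cdot\nabla_{\rm h}u_3$, $u_3\nabla_{\rm h}\cdot u_{\rm h}$, $\partial_{x_3}P_1$ and $\partial_{x_3}P_2$. Lemma \ref{lemm:nonlin-1} (second and third estimates) and Lemma \ref{lemm:nonlin-3} bound each term by products of the above norms with exponents $r,r'$ or $\rho,\rho'$. For $\partial_{x_3}P_2$ the factor carries regularity index $\frac3p-1+\frac2r$ at time exponent $r'$; since $\dB_{p,q}\hookrightarrow\dB_{p,\infty}$, this still reduces to $E_{p,q}$ after interpolation, reading the index as the intended $2/r'$. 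We fix any $\rho\in[1,2p/3)$. Every factor is then controlled by $\n{u_{\rm h}}_{E_{p,q}}$ or $\n{u_3}_{E_{\infty,\sigma}}$, which gives the second inequality directly, with no smallness needed.

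\textbf{The $u_{\rm h}$ estimate.} The terms $(u_{\rm h}\cdot\nabla_{\rm h})u_{\rm h}$ and $\nabla_{\rm h}P_1$ give $C\n{u_{\rm h}}_{E_{p,q}}^2$. The cross terms $u_3\partial_{x_3}u_{\rm h}$ (Lemma \ref{lemm:nonlin-2}) and $\nabla_{\rm h}P_2$ (Lemma \ref{lemm:nonlin-3}) give $\n{u_{\rm h}}_{E_{p,q}}$ times either $\n{u_3}_{\widetilde{L^r}(\dB_{\infty,\sigma}^{-1+2/r})}$ or $\n{u_3}_{\widetilde{L^\rho}(\dB_{\infty,\sigma}^{-1+2/\rho})}$. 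The $r$-piece already has the desired form, since $\n{u_3}_{\widetilde{L^r}}\le \n{u_3}_E^{1-\theta}\n{u_3}_{\widetilde{L^r}}^{\theta}$. The $\rho$-piece is the real point. We interpolate the $\widetilde{L^\rho}$ norm between $\widetilde{L^1}(\dB^{1}_{\infty,\sigma})$, a part of $E_{\infty,\sigma}$, and $\widetilde{L^r}(\dB^{-1+2/r}_{\infty,\sigma})$. With $\frac1\rho=\frac{1-\theta}{1}+\frac{\theta}{r}$, the regularity indices match automatically, $-1+\frac2\rho=(1-\theta)\cdot1+\theta(-1+\frac2r)$. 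Hölder in time and then in $j$ give
\begin{align}
\n{u_3}_{\widetilde{L^\rho}(0,T;\dB_{\infty,\sigma}^{-1+\frac2\rho})}\le \n{u_3}_{\widetilde{L^1}(0,T;\dB_{\infty,\sigma}^{1})}^{1-\theta}\n{u_3}_{\widetilde{L^r}(0,T;\dB_{\infty,\sigma}^{-1+\frac2r})}^{\theta}.
\end{align}

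\textbf{Main obstacle: the admissible range of $\theta$.} The constraint $\rho<2p/3$ translates to $1-\theta+\theta/r>3/(2p)$, that is, $\theta<\frac{r}{r-1}(1-\frac{3}{2p})$. This is exactly the hypothesis on $\theta$ in Theorem \ref{thm:large}, so $\rho$ determined by $\theta$ is admissible, and $\rho\ge1$ holds since $\theta\ge0$. The remaining step is the routine check that the companion factor $\n{u_{\rm h}}_{\widetilde{L^{\rho'}}(\dB_{p,q}^{3/p-1+2/\rho'})}$, with $\rho'\in(1,\infty]$, is bounded by $\n{u_{\rm h}}_{E_{p,q}}$, which is again interpolation. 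Collecting all terms with a common constant $C_0$ gives both inequalities for every $T<T_{\rm max}$, and the case $T=T_{\rm max}$ follows by monotone limits.
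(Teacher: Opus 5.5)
Your proposal is correct and follows essentially the same route as the paper. It applies maximal regularity to the Duhamel formulation, uses the bilinear bounds of Lemmas \ref{lemm:nonlin-1}--\ref{lemm:nonlin-3} with $u=v$, and makes the same choice $\frac{1}{\rho}=1-\theta+\frac{\theta}{r}$, which is exactly where the upper bound on $\theta$ enters through $\rho<2p/3$. It then interpolates the $\widetilde{L^\rho}$ norm of $u_3$ by H\"older between $\widetilde{L^1}(\dB^{1}_{\infty,\sigma})$ and $\widetilde{L^r}(\dB^{-1+2/r}_{\infty,\sigma})$. You are also slightly more careful than the written proof in two places: you take the two-piece ($\rho$ and $r$) bound from Lemma \ref{lemm:nonlin-2} for $u_3\partial_{x_3}u_{\rm h}$, rather than the displayed $C\n{u_{\rm h}}_{E_{p,q}}\n{u_3}_{E_{\infty,\sigma}}$, which alone would not give the stated form; and you read the $2/r$ index in the $\partial_{x_3}P_2$ bound as the intended $2/r'$.
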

\begin{proof}
    Let us choose $\rho$ so that
    \begin{align}
        \frac{1}{\rho} = \frac{1-\theta}{1} + \frac{\theta}{r}.
    \end{align}
    Then, we see that $1 < \rho < 2p/3$.
    Then, it follows from Lemmas \ref{lemm:nonlin-1}, \ref{lemm:nonlin-2}, and \ref{lemm:nonlin-3} that
    \begin{align}
        &
        \n{\int_0^t e^{(t-\tau)\Delta}(u_{\rm h}(\tau) \cdot \nabla_{\rm h})u_{\rm h}(\tau) d\tau}_{E_{p,q}(0,T)}
        \leq
        C
        \n{u_{\rm h}}_{E_{p,q}(0,T)}^2,
        \\
        &
        \n{\int_0^t e^{(t-\tau)\Delta}(u_3(\tau) \partial_{x_3} v_{\rm h}(\tau)) d\tau}_{E_{p,q}(0,T)}
        \leq
        C
        \n{u_{\rm h}}_{E_{p,q}(0,T)}
        \n{u_3}_{E_{\infty,\sigma}(0,T)},
        \\
        &
        \begin{aligned}
        \n{\int_0^t e^{(t-\tau)\Delta}\nabla_{\rm h}P(\tau) d\tau}_{E_{p,q}(0,T)}
        \leq{}&
        C
        \n{u_{\rm h}}_{E_{p,q}(0,T)}^2\\
        &
        +
        C
        \n{u_3}_{\widetilde{L^{\rho}}(0,T;\dB_{\infty,\sigma}^{-1+\frac{2}{\rho}})}
        \n{u_{\rm h}}_{E_{p,q}(0,T)}\\
        &
        +
        C
        \n{u_3}_{\widetilde{L^{r}}(0,T;\dB_{\infty,\sigma}^{-1+\frac{2}{r}})}
        \n{u_{\rm h}}_{E_{p,q}(0,T)},
        \end{aligned}
        \\
        &
        \n{\int_0^t e^{(t-\tau)\Delta}(u_{\rm h}(\tau) \cdot \nabla_{\rm h}u_3(\tau)) d\tau}_{E_{\infty,\sigma}(0,T)}
        \leq
        C
        \n{u_{\rm h}}_{E_{p,q}(0,T)}
        \n{u_3}_{E_{\infty,\sigma}(0,T)},
        \\
        &
        \n{\int_0^t e^{(t-\tau)\Delta}(u_3(\tau) \nabla_{\rm h} \cdot u_{\rm h}(\tau)) d\tau}_{E_{\infty,\sigma}(0,T)}
        \leq
        C
        \n{u_{\rm h}}_{E_{p,q}(0,T)}
        \n{u_3}_{E_{\infty,\sigma}(0,T)},
        \\
        &
        \n{\int_0^t e^{(t-\tau)\Delta}\partial_{x_3}P(\tau) d\tau}_{E_{\infty,\sigma}(0,T)}
        \leq{}
        C
        \n{u_{\rm h}}_{E_{p,q}(0,T)}^2
        +
        C
        \n{u_3}_{E_{\infty,\sigma}(0,T)}
        \n{u_{\rm h}}_{E_{p,q}(0,T)}.
    \end{align}
    Therefore, we have
    \begin{align}
        &
        \begin{aligned}
        \n{u_{\rm h}}_{E_{p,q}(0,T)}
        \leq{}
        C
        \n{a_{\rm h}}_{\dB_{p,q}^{\frac{3}{p}-1}}
        &+
        C
        \n{u_{\rm h}}_{E_{p,q}(0,T)}^2
        \\
        &
        +
        C
        \n{u_3}_{\widetilde{L^{\rho}}(0,T;\dB_{\infty,\sigma}^{-1+\frac{2}{\rho}})}
        \n{u_{\rm h}}_{E_{p,q}(0,T)}\\
        &
        +
        C
        \n{u_3}_{\widetilde{L^{r}}(0,T;\dB_{\infty,\sigma}^{-1+\frac{2}{r}})}
        \n{u_{\rm h}}_{E_{p,q}(0,T)},
        \end{aligned}
        \\
        &
        \begin{aligned}
        \n{u_3}_{E_{\infty,\sigma}(0,T)}
        \leq{}
        C
        \n{a_3}_{\dB_{\infty,\sigma}^{-1}}
        +
        C
        \n{u_{\rm h}}_{E_{p,q}(0,T)}^2
        +
        C
        \n{u_{\rm h}}_{E_{p,q}(0,T)}
        \n{u_3}_{E_{\infty,\sigma}(0,T)}.
        \end{aligned}
    \end{align}
    Combining them with  
    \begin{align}
        \n{u_3}_{\widetilde{L^{\rho}}(0,T;\dB_{\infty,\sigma}^{-1+\frac{2}{\rho}})}
        &\leq
        \n{u_3}_{\widetilde{L^1}(0,T;\dB_{\infty,\sigma}^{1})}^{1-\theta}
        \n{u_3}_{\widetilde{L^{r}}(0,T;\dB_{\infty,\sigma}^{-1+\frac{2}{r}})}^{\theta}\\
        &\leq 
        \n{u_3}_{E_{\infty,\sigma}(0,T)}^{1-\theta}
        \n{u_3}_{\widetilde{L^{r}}(0,T;\dB_{\infty,\sigma}^{-1+\frac{2}{r}})}^{\theta},\\
        %%%%%%%%
        \n{u_3}_{\widetilde{L^r}(0,T;\dB_{\infty,\sigma}^{-1+\frac{2}{r}})}
        &\leq
        \n{u_3}_{E_{\infty,\sigma}(0,T)}^{1-\theta}
        \n{u_3}_{\widetilde{L^{r}}(0,T;\dB_{\infty,\sigma}^{-1+\frac{2}{r}})}^{\theta},
    \end{align}
    we complete the proof.
\end{proof}

\begin{proof}[Proof of Theorem \ref{thm:large}]
    Let $p$, $q$, $r$, $\theta$, and $\sigma$ satisfy the assumption of Theorem \ref{thm:large}.
    We assume that 
    \begin{align}
        \n{a_{\rm h}}_{\dB_{p,q}^{\frac{3}{p}-1}}
        \exp
        \sp{2C_0(\log (2C_0))(4C_0)^{\frac{r}{\theta}}
        \sp{\frac{4}{3}C_0\n{a_3}_{\dB_{\infty,\sigma}^{-1}}+\frac{1}{12C_0}}^{\frac{r}{\theta}}
        }
        \leq
        \frac{1}{64C_0^4},
    \end{align}
    where $C_0$ is the positive constant appearing in Lemma \ref{lemm:a-priori}. 
    Let $u$ be the local solution to the system on the time interval $[0,T_{\rm max})$, where $T_{\rm max}$ denotes the maximal existence time.
    Let 
    \begin{align}
        T^*
        :=
        \sup
        \Mp{
        T \in (0,T_{\rm max})\ ;\ 
        \n{u_{\rm h}}_{E_{p,q}(0,T)}
        \leq
        \frac{1}{4C_0}
        }
    \end{align}
    and suppose by contradiction that 
    \begin{align}
        T^* < \infty.
    \end{align}
    Note that it holds
    \begin{align}
        \n{u_3}_{E_{\infty,\sigma}(0,T^*)}
        \leq
        C_0
        \n{a_3}_{\dB_{\infty,\sigma}^{-1}}
        +
        \frac{1}{16C_0}
        +
        \frac{1}{4}
        \n{u_3}_{E_{\infty,\sigma}(0,T^*)},
    \end{align}
    and thus we have 
    \begin{align}
        \n{u_3}_{E_{\infty,\sigma}(0,T^*)}
        \leq
        \frac{4}{3}C_0
        \n{a_3}_{\dB_{\infty,\sigma}^{-1}}
        +
        \frac{1}{12C_0}.
    \end{align}
    Let
    \begin{align}
        N:= 
        \left \lfloor 
        (4C_0)^{\frac{r}{\theta}}
        \n{u_3}_{E_{\infty,\sigma}(0,T^*)}^{\frac{r}{\theta}-r}
        \n{u_3}_{\widetilde{L^r}(0,T^*;\dB_{\infty,\sigma}^{-1+\frac{2}{r}})}^r
        \right \rfloor.
    \end{align}
    Then, by Lemma \ref{lemm:time}, there exists a sequence 
    \begin{align}
        0=T_0<T_1<\dots<T_N<T_{N+1}=T^*
    \end{align}
    of time such that 
    \begin{align}
        \n{u_3}_{\widetilde{L^r}(T_n,T_{n+1};\dB_{\infty,\sigma}^{-1+\frac{2}{r}})}^{\theta}
        \leq 
        \frac{1}{4C_0
        \n{u_3}_{E_{\infty,\sigma}(0,T^*)}^{1-\theta}} 
    \end{align}
    for $n = 0,1,\dots,N$.
    Applying Lemma \ref{lemm:a-priori} on $[T_n,T_{n+1}]$, we arrive at 
    \begin{align}
        &
        \n{u_{\rm h}}_{E_{p,q}(T_n,T_{n+1})}
        \leq
        C_0
        \n{u_{\rm h}(T_n)}_{\dB_{p,q}^{\frac{3}{p}-1}}
        +
        \frac{1}{4}
        \n{u_{\rm h}}_{E_{p,q}(T_n,T_{n+1})}
        +
        \frac{1}{4}
        \n{u_{\rm h}}_{E_{p,q}(T_n,T_{n+1})},
    \end{align}
    which yields 
    \begin{align}
        &
        \n{u_{\rm h}}_{E_{p,q}(T_n,T_{n+1})}
        \leq{}
        2C_0\n{u_{\rm h}(T_n)}_{\dB_{p,q}^{\frac{3}{p}-1}}.
    \end{align}
    Therefore, we see that
    \begin{align}
        \n{u_{\rm h}}_{E_{p,q}(T_n,T_{n+1})}
        &\leq{}
        2C_0
        \n{u_{\rm h}(T_n)}_{\dB_{p,q}^{\frac{3}{p}-1}}\\
        &\leq{}
        2C_0\n{u_{\rm h}}_{E_{p,q}(T_{n-1},T_{n})}\\
        &\leq{} \cdots \\
        &\leq{}
        (2C_0)^{n+1}\n{a_{\rm h}}_{\dB_{p,q}^{\frac{3}{p}-1}}, \qquad n = 1,\dots,N
    \end{align}
    and then
    \begin{align}
        &\n{u_{\rm h}}_{E_{p,q}(0,T^*)}
        \leq
        \sum_{n=0}^N
        \n{u_{\rm h}}_{E_{p,q}(T_n,T_{n+1})}\\
        &\quad\leq
        \sum_{n=0}^N
        (2C_0)^{n+1}\n{a_{\rm h}}_{\dB_{p,q}^{\frac{3}{p}-1}}\\
        &\quad\leq
        (2C_0)^{N+2}\n{a_{\rm h}}_{\dB_{p,q}^{\frac{3}{p}-1}}\\
        &\quad= 
        (2C_0)^2
        \n{a_{\rm h}}_{\dB_{p,q}^{\frac{3}{p}-1}}
        \exp
        \sp{(\log (2C_0))N}\\
        &\quad\leq 
        (2C_0)^2
        \n{a_{\rm h}}_{\dB_{p,q}^{\frac{3}{p}-1}}
        \exp
        \sp{(\log (2C_0))\left\lfloor(4C_0)^{\frac{r}{\theta}}
        \n{u_3}_{E_{\infty,\sigma}(0,T^*)}^{\frac{r}{\theta}}\right \rfloor}\\
        &\quad\leq 
        (2C_0)^3
        \n{a_{\rm h}}_{\dB_{p,q}^{\frac{3}{p}-1}}
        \exp
        \sp{2C_0(\log (2C_0))(4C_0)^{\frac{r}{\theta}}
        \sp{\frac{4}{3}C_0\n{a_3}_{\dB_{\infty,\sigma}^{-1}}+\frac{1}{12C_0}}^{\frac{r}{\theta}}
        }
        \\
        &\quad\leq
        \frac{1}{8C_0},
    \end{align}
    which contradicts the definition of $T^*$ and we have $T_{\rm max}=T^*=\infty$.
    This completes the proof.
\end{proof}

%******************
%Acknowledgements
%******************
\noindent
{\bf Data availability.} \\
Data sharing not applicable to this article as no datasets were generated or analysed during the current study.

\noindent
{\bf Conflict of interest.} \\
The author has declared no conflicts of interest.

\noindent
{\bf Acknowledgements.} \\
The author was supported by JSPS KAKENHI, Grant Number Navier--Stokes equations; well-posedness; critical Besov spaces.

\appendix
\def\thesection{\Alph{section}}
\section{Besov and Chemin--Lerner spaces}\label{sec:A}
Let $\mathscr{S}(\mathbb{R}^n)$ be the set of all Schwartz functions on $\mathbb{R}^n$, and 
we denote by {$\mathscr{S}'(\mathbb{R}^n)$} the set of all tempered distributions on $\mathbb{R}^n$.
Let $\varphi_0 \in \mathscr{S}(\mathbb{R}^n)$ satisfy 
\begin{align}
    \supp \widehat{\varphi_0} \subset \Mp{ \xi \in \mathbb{R}^n\ ;\ 2^{-1} \leq | \xi | \leq 2 },\quad
    0 \leq \widehat{\varphi_0}(\xi) \leq 1, 
\end{align}
and 
\begin{align}
    \sum_{j \in \mathbb{Z}}
    \widehat{\varphi_j}(\xi) = 1 \qquad {\rm for\ all\ }\xi \in  \mathbb{R}^n \setminus \{0\},
\end{align}
where we have set $\varphi_j(x):=2^{nj}\varphi_0(2^jx)$.
{Using them, we define the dyadic frequency localized operators by
\begin{align}
    \Delta_jf:=\mathscr{F}^{-1}\lp{\widehat{\varphi_j}(\xi)\widehat{f}}, \qquad f \in \mathscr{S}'(\mathbb{R}^n),\ j \in \mathbb{Z}.
\end{align}}
For $1 \leq p,\sigma \leq \infty$ and $s \in \mathbb{R}$, the Besov space $\dB_{p,\sigma}^s(\mathbb{R}^n)$ is defined as 
\begin{align}
    \dB_{p,\sigma}^s(\mathbb{R}^n)
    :={}&
    \Mp{
    f \in \mathscr{S}'(\mathbb{R}^n)/\mathscr{P}(\mathbb{R}^n)
    \ ; \ 
    \n{f}_{\dB_{p,\sigma}^s(\mathbb{R}^n)}<\infty
    },\\
    \n{f}_{\dB_{p,\sigma}^s(\mathbb{R}^n)}
    :={}&
    \n{
    \Mp{
    2^{sj}
    \n{\Delta_j f}_{L^p(\mathbb{R}^n)}
    }_{j \in \mathbb{Z}}
    }_{\ell^{\sigma}(\mathbb{Z})},
\end{align}
where $\mathscr{P}(\mathbb{R}^n)$ is the set of all polynomials on $\mathbb{R}^n$.
It is well-known that if $s <n/p$ or $(s,\sigma) = (n/p,1)$, then it holds
\begin{align}
    \dB_{p,\sigma}^s(\mathbb{R}^n)
    \sim
    \Mp{
    f \in \mathscr{S}'(\mathbb{R}^n)
    \ ; \ 
    \n{f}_{\dB_{p,\sigma}^s(\mathbb{R}^n)}<\infty,\quad
    f = \sum_{j \in \mathbb{Z}}\Delta_j f \quad {\rm in\ }\mathscr{S}'(\mathbb{R}^n)
    }.
\end{align}
For $1 \leq p,r,\sigma \leq \infty$, $s \in \mathbb{R}$, and an interval $I \subset \mathbb{R}$, we define 
the Chemin--Lerner space $\widetilde{L^r}(I;\dB_{p,\sigma}^s(\mathbb{R}^n))$ by 
\begin{align}
    \widetilde{L^r}(I;\dB_{p,\sigma}^s(\mathbb{R}^n))
    :={}&
    \Mp{
    F:I \to  \mathscr{S}'(\mathbb{R}^n)/\mathscr{P}(\mathbb{R}^n)
    \ ; \ 
    \n{F}_{\widetilde{L^r}(I;\dB_{p,\sigma}^s(\mathbb{R}^n))}
    <\infty
    },\\
    \n{F}_{\widetilde{L^r}(I;\dB_{p,\sigma}^s(\mathbb{R}^n))}
    :={}&
    \n{
    \Mp{
    2^{sj}
    \n{\Delta_j F}_{L^r(I;L^p(\mathbb{R}^n))}
    }_{j \in \mathbb{Z}}
    }_{\ell^{\sigma}(\mathbb{Z})}.
\end{align}
We also use the following notation
\begin{align}
    \widetilde{C}(I ; \dB_{p,\sigma}^s(\mathbb{R}^n))
    :=
    C(I ; \dB_{p,\sigma}^s(\mathbb{R}^n))
    \cap
    \widetilde{L^{\infty}}(I ; \dB_{p,\sigma}^s(\mathbb{R}^n)).
\end{align}
The Chemin--Lerner spaces were first introduced by \cite{Che-Ler-95} and continue to be frequently used for the analysis of compressible viscous fluids in critical Besov spaces.
The Chemin--Lerner spaces possess similar embedding properties as that for usual Besov spaces:
\begin{itemize}
    \item []
        $\widetilde{L^r}(I; \dB_{p,\sigma_1}^s(\mathbb{R}^n)) 
        \hookrightarrow
        \widetilde{L^r}(I; \dB_{p,\sigma_2}^s(\mathbb{R}^n))$ 
        for $1 \leqslant \sigma_1 \leqslant \sigma_2 \leqslant \infty$,
    \item []
        $\widetilde{L^r}(I; \dB_{p_1,\sigma}^{s+\frac{n}{p_1}}(\mathbb{R}^n)) 
        \hookrightarrow
        \widetilde{L^r}(I; \dB_{p_2,\sigma}^{s+\frac{n}{p_2}}(\mathbb{R}^n))$ 
        for $1 \leqslant p_1 \leqslant p_2 \leqslant \infty$.
\end{itemize}
It also holds by the Hausdorff--Young inequality that
\begin{align}
    &
    \widetilde{L^r}(I; \dB_{p,\sigma}^s(\mathbb{R}^n))
    \hookrightarrow
    {L^r}(I; \dB_{p,\sigma}^s(\mathbb{R}^n))
    \ {\rm for\ }1 \leqslant \sigma \leqslant r \leqslant \infty,\\
    %%%%%%%%%%%%%%%%%%
    &
    {L^r}(I; \dB_{p,\sigma}^s(\mathbb{R}^n)) 
    \hookrightarrow
    \widetilde{L^r}(I; \dB_{p,\sigma}^s(\mathbb{R}^n))
    \ {\rm for\ }1 \leqslant r \leqslant \sigma \leqslant \infty.
\end{align}
See \cite{Bah-Che-Dan-11} for more precise information of the Chemin--Lerner spaces.
One advantage of using the Chemin--Lerner spaces is that there holds the following maximal regularity estimates for the heat kernel $e^{t\Delta}$.
\begin{lemm}\label{lemm:max-reg}
    Let $1 \leq p, q \leq \infty$, $1 \leq \rho \leq r \leq \infty$, and $s \in \mathbb{R}$.
    Then, there exists a positive constant $C$ such that 
    \begin{align}
        \n{e^{t \Delta}a}_{\widetilde{L^r}(0,T;\dB_{p,q}^s)}
        &\leq
        C
        \n{a}_{\dB_{p,q}^s}, 
        \\
        \n{\int_0^t e^{(t-\tau)\Delta}f(\tau)d\tau}_{\widetilde{L^r}(0,T;\dB_{p,q}^{s+\frac{2}{r}})}
        &\leq
        C
        \n{f}_{\widetilde{L^{\rho}}(0,T;\dB_{p,q}^{s-2+\frac{2}{\rho}})}
    \end{align}
    for all $0<T\leq \infty$, $a \in \dB_{p,q}^s(\mathbb{R}^3)$ and $\widetilde{L^{\rho}}(0,T;\dB_{p,q}^{s-2+\frac{2}{\rho}}(\mathbb{R}^3))$.
\end{lemm}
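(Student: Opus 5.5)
The statement to prove is Lemma \ref{lemm:max-reg}: the maximal regularity estimates for the heat semigroup in Chemin--Lerner spaces. Everything reduces to dyadic blocks. The Chemin--Lerner norm takes the $\ell^q$ norm over $j$ of $2^{sj}\n{\Delta_j F}_{L^r(0,T;L^p)}$. Hence it is enough to prove, for each fixed $j$, a bound on $\n{\Delta_j(\cdot)}_{L^r(0,T;L^p)}$ with the correct power of $2^j$, with a constant uniform in $j$. Then we multiply by $2^{sj}$ (respectively $2^{(s+2/r)j}$) and take the $\ell^q$ norm.

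The key ingredient is the standard localized heat kernel estimate. Since $\widehat{\varphi_j}$ is supported in the annulus $\{2^{j-1}\leq|\xi|\leq 2^{j+1}\}$, there are constants $c,C>0$ independent of $j$ such that
\begin{align}
    \n{e^{t\Delta}\Delta_j f}_{L^p}\leq C e^{-c2^{2j}t}\n{\Delta_j f}_{L^p}
\end{align}
for all $1\leq p\leq\infty$ and $t\geq0$. To prove it, write $e^{t\Delta}\Delta_j f=\mathscr{F}^{-1}[\widetilde{\phi}(2^{-j}\xi)e^{-t|\xi|^2}]*\Delta_j f$, where $\widetilde{\phi}$ is a smooth cutoff equal to $1$ on the annulus. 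Rescaling reduces the claim to $j=0$, where one shows $\n{\mathscr{F}^{-1}[\widetilde{\phi}(\xi)e^{-t|\xi|^2}]}_{L^1}\leq Ce^{-ct}$. This follows by integrating by parts, i.e.\ bounding $(1+|x|^2)^2$ times the kernel using derivatives of the symbol on the annulus. Young's inequality then gives the $L^p$ bound. I expect this kernel bound to be the only genuinely technical step, and it is classical (see \cite{Bah-Che-Dan-11}).

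The free part follows directly. Taking the $L^r(0,T)$ norm of $Ce^{-c2^{2j}t}\n{\Delta_j a}_{L^p}$ gives
\begin{align}
    \n{\Delta_j e^{t\Delta}a}_{L^r(0,T;L^p)}\leq C2^{-\frac{2}{r}j}\n{\Delta_j a}_{L^p}.
\end{align}
Here $r=\infty$ is allowed and gives no gain. Note that the first estimate as stated has regularity $s$ on both sides. If it is read with $s+\frac{2}{r}$ on the left, which is the form actually used in the paper, we obtain exactly that. If it is read literally with $s$ on the left, we use instead the trivial bound $\n{\Delta_j e^{t\Delta}a}_{L^r(0,T;L^p)}\leq CT^{1/r}\n{\Delta_j a}_{L^p}$. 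The intended form is the $E_{p,q}$-type bound with $s+\frac{2}{r}$, so I prove that one.

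For the Duhamel term we apply $\Delta_j$ and Minkowski's inequality:
\begin{align}
    \n{\Delta_j\int_0^te^{(t-\tau)\Delta}f\,d\tau}_{L^p}\leq C\int_0^t e^{-c2^{2j}(t-\tau)}\n{\Delta_j f(\tau)}_{L^p}\,d\tau.
\end{align}
The right-hand side is a time convolution with the kernel $k_j(t)=e^{-c2^{2j}t}\mathbf{1}_{t>0}$. Young's inequality in time, with $1+\frac1r=\frac1\rho+\frac1m$ (which requires $\rho\leq r$, hence $m\geq1$), gives
\begin{align}
    \n{k_j}_{L^m}\leq C2^{-\frac{2}{m}j}=C2^{-2j(1+\frac1r-\frac1\rho)}.
\end{align}
Therefore $2^{(s+\frac2r)j}\n{\Delta_j\cdots}_{L^r(0,T;L^p)}\leq C2^{(s-2+\frac2\rho)j}\n{\Delta_j f}_{L^\rho(0,T;L^p)}$. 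Taking the $\ell^q$ norm over $j$ finishes the proof. The endpoint cases $m=1$ or $m=\infty$ are handled by the same computation, and all constants are independent of $T$, which gives uniformity for $0<T\leq\infty$.
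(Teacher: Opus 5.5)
Your proof is correct and is the standard argument the paper relies on; the paper states this lemma without proof and points to Bahouri--Chemin--Danchin. That argument is the blockwise decay $\n{e^{t\Delta}\Delta_j f}_{L^p}\leq Ce^{-c2^{2j}t}\n{\Delta_j f}_{L^p}$, then Young's inequality in time, then summation in $\ell^q$. Reading the first estimate with $s+\frac{2}{r}$ on the left is in fact necessary, not just convenient: for $r<\infty$ the literal version cannot hold with $C$ uniform in $T$, since at $T=\infty$ a single block at frequency $2^j$ gives a ratio $\gtrsim 2^{-\frac{2}{r}j}\to\infty$ as $j\to-\infty$, so the $T^{\frac{1}{r}}$ fallback does not prove the statement as written.
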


To control the nonlinear terms in Chemin--Lerner spaces, 
we use we the para-product decomposition of the product $fg$ for two functions $f$ and $g$:
\begin{align}
    fg = T_fg + R(f,g) + T_gf,
\end{align}
where we have set 
\begin{align}
    T_fg := 
    \sum_{k \in \mathbb{Z}} 
    \left( \sum_{\ell \leq k-3} \Delta_{\ell} f \right) 
    \Delta_k g,
    \qquad
    R(f,g) :=
    \sum_{k \in \mathbb{Z}}
    \sum_{|k-\ell|\leq 2}
    \Delta_kf 
    \Delta_{\ell}g.
\end{align}
The the para-product estimates used in this paper are in order:
\begin{lemm}\label{lemm:nonlin}
    Let $1 \leq p_1,p_2,q,q_1,q_2,r,r_1,r_2 \leq \infty$ satisfy 
        \begin{align}
            \frac{1}{q}\leq\frac{1}{q_1}+\frac{1}{q_2}, 
            \qquad
            \frac{1}{r}=\frac{1}{r_1}+\frac{1}{r_2}.
        \end{align}
    Then, the following statements hold true.
    \begin{enumerate}
        \item 
        For any $s_1,s_2 \in \mathbb{R}$ satisfying 
        \begin{align}
            s_1 < \frac{3}{p_1}
        \end{align} 
        and $<$ above may be replaced by $\leq$ for the case of $q_1=1$,
        there exists a positive constant $C=C(p_1,p_2,q,q_1,q_2,s_1,s_2)$ such that
        \begin{align}
            \n{T_fg}_{\widetilde{L^r}(I;\dB_{p_2,q}^{s_1+s_2-\frac{3}{p_1}})}
            \leq
            C
            \n{f}_{\widetilde{L^{r_1}}(I;\dB_{p_1,q_1}^{s_1})}
            \n{g}_{\widetilde{L^{r_2}}(I;\dB_{p_2,q_2}^{s_2})}
        \end{align}
        for all intervals $I \subset \mathbb{R}$, $f \in \widetilde{L^{r_1}}(I;\dB_{p,q}^{s_1}(\mathbb{R}^3))$, and $g \in \widetilde{L^{r_1}}(I;\dB_{p,q}^{s_2}(\mathbb{R}^3))$.
        \item 
        For any $s_1,s_2 \in \mathbb{R}$ satisfying
        \begin{align}
            s_1+s_2 > \max\Mp{0, 3\sp{\frac{1}{p_1}+\frac{1}{p_2}-1}}
        \end{align}
        and $>$ above may be replaced by $\geq$ for the case of $q=\infty$ and $1/q_1+1/q_2=1$,
        there exists a positive constant $C=C(p_1,p_2,q,q_1,q_2,s_1,s_2)$ such that
        \begin{align}
            \n{R(f,g)}_{\widetilde{L^r}(I;\dB_{p_2,q}^{s_1+s_2-\frac{3}{p_1}})}
            \leq
            C
            \n{f}_{\widetilde{L^{r_1}}(I;\dB_{p_1,q_1}^{s_1})}
            \n{g}_{\widetilde{L^{r_2}}(I;\dB_{p_2,q_2}^{s_2})}
        \end{align}
        for all intervals $I \subset \mathbb{R}$, $f \in \widetilde{L^{r_1}}(I;\dB_{p,q}^{s_1}(\mathbb{R}^3))$, and $g \in \widetilde{L^{r_1}}(I;\dB_{p,q}^{s_2}(\mathbb{R}^3))$.
        \item 
        For any $s_1,s_2 \in \mathbb{R}$ satisfying
        \begin{align}
            s_2 < \min \Mp{\frac{3}{p_1},\frac{3}{p_2}}
        \end{align}
        and $<$ above may be replaced by $\leq$ for the case of $q_2=1$,
        there exists a positive constant $C=C(p_1,p_2,q,q_1,q_2,s_1,s_2)$ such that
        \begin{align}
            \n{T_gf}_{\widetilde{L^r}(I;\dB_{p_2,q}^{s_1+s_2-\frac{3}{p_1}})}
            \leq
            C
            \n{f}_{\widetilde{L^{r_1}}(I;\dB_{p_1,q_1}^{s_1})}
            \n{g}_{\widetilde{L^{r_2}}(I;\dB_{p_2,q_2}^{s_2})}
        \end{align}
        for all intervals $I \subset \mathbb{R}$, $f \in \widetilde{L^{r_1}}(I;\dB_{p,q}^{s_1}(\mathbb{R}^3))$, and $g \in \widetilde{L^{r_1}}(I;\dB_{p,q}^{s_2}(\mathbb{R}^3))$.
    \end{enumerate}
\end{lemm}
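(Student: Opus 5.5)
The estimates are proved blockwise, following the standard Littlewood--Paley product analysis, and then the Chemin--Lerner norms are assembled. I write out case (1); cases (2) and (3) use the same scheme.

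\emph{Case (1), paraproduct $T_fg$.} The summand $S_{k-3}f\,\Delta_k g$ with $S_{k-3}f=\sum_{\ell\le k-3}\Delta_\ell f$ has Fourier support in an annulus $\{|\xi|\sim 2^k\}$. Hence $\Delta_j T_fg$ only involves $|k-j|\le 4$. By H\"older in space, and then in time with $1/r=1/r_1+1/r_2$,
\begin{align}
\n{\Delta_j T_fg}_{L^r(I;L^{p_2})}\le C\sum_{|k-j|\le4}\sum_{\ell\le k-3}\n{\Delta_\ell f}_{L^{r_1}(I;L^\infty)}\n{\Delta_k g}_{L^{r_2}(I;L^{p_2})}.
\end{align}
Bernstein's inequality gives $\n{\Delta_\ell f}_{L^\infty}\le C2^{3\ell/p_1}\n{\Delta_\ell f}_{L^{p_1}}$. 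We then write $2^{(3/p_1-s_1)\ell}=2^{(3/p_1-s_1)k}2^{-(3/p_1-s_1)(k-\ell)}$. Since $s_1<3/p_1$, the geometric factor makes the sum over $\ell$ bounded by $\sup_\ell 2^{s_1\ell}\n{\Delta_\ell f}_{L^{r_1}L^{p_1}}$. In the endpoint $s_1=3/p_1$ we use the $\ell^1$ norm instead, which is why $q_1=1$ is required there. Multiplying by $2^{(s_1+s_2-3/p_1)j}$ and taking $\ell^q$ in $j$ yields the bound. The step from $\ell^{q_2}$ to $\ell^q$ follows from $\ell^{q_2}\hookrightarrow\ell^q$ when $q\ge q_2$, and otherwise from a discrete Young/H\"older inequality using $1/q\le1/q_1+1/q_2$ (keep $\ell^{q_1}$ on the $f$-side and apply H\"older in $j$).

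\emph{Case (3)} is symmetric, with the low-frequency factor $g$ measured in $L^\infty$. Bernstein costs $2^{3k/p_2}$ from $L^{p_2}$, so the condition $s_2<3/p_2$ is needed. The output is in $L^{p_2}$, while $f$ is placed in $L^{p_1}$. If $p_1\le p_2$, I first embed $\dB^{s_1}_{p_1}\hookrightarrow\dB^{s_1-3/p_1+3/p_2}_{p_2}$. If $p_1>p_2$, I instead put the low part $S_{k-3}g$ in $L^{p_2p_1/(p_1-p_2)}$. That is where the second constraint $s_2<3/p_1$ comes from, giving the $\min$ condition.

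\emph{Case (2), remainder $R(f,g)$.} This is the main obstacle. Each piece $\Delta_kf\,\tilde\Delta_kg$ has Fourier support in a ball of radius $C2^k$, so $\Delta_j R$ sums over all $k\ge j-N_0$. The product is placed in $L^{m}$ with $1/m=1/p_1+1/p_2$ when $p_1^{-1}+p_2^{-1}\le1$, and in $L^1$ otherwise. Bernstein then moves the output from $L^m$ to $L^{p_2}$ at the cost $2^{3j(1/m-1/p_2)}=2^{3j/p_1}$; in the $L^1$ case the cost becomes $2^{3j(1-1/p_2)}$. This produces a sum of the form
\begin{align}
2^{(s_1+s_2-3/p_1)j}\n{\Delta_jR}\lesssim\sum_{k\ge j-N_0}2^{-(s_1+s_2-\kappa)(k-j)}\,2^{s_1k}\n{\Delta_kf}\,2^{s_2k}\n{\Delta_kg},
\end{align}
where $\kappa=\max\{0,3(1/p_1+1/p_2-1)\}$. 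The sum converges exactly when $s_1+s_2>\kappa$, and Young's inequality for convolutions in $\ell^q$ then concludes. In the borderline $s_1+s_2=\kappa$ no geometric decay is available. There I use H\"older in $k$ with $1/q_1+1/q_2=1$, which bounds the sum uniformly in $j$ and hence only in $\ell^\infty$, explaining the requirement $q=\infty$.

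All time integrations are performed blockwise before taking $\ell^q$ sums, which is precisely what the $\widetilde{L^r}$ norms permit.
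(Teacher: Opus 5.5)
Your route is the standard Littlewood--Paley paraproduct argument, which is what the paper defers to: it omits the proof and cites Bahouri--Chemin--Danchin and Sawada. Your argument is fine for (2) and for the strict-inequality cases of (1) and (3). The step that fails is the passage to $\ell^q$ in the endpoint clauses. When $s_1=3/p_1$ and $q_1=1$, the low-frequency factor produces
\[
a_k=\sum_{\ell\le k-3}2^{s_1\ell}\n{\Delta_\ell f}_{L^{r_1}(I;L^{p_1})}.
\]
This sequence is nondecreasing in $k$ and tends to $\n{f}_{\widetilde{L^{r_1}}(I;\dB^{3/p_1}_{p_1,1})}$. It is bounded but not in $\ell^{q_1}=\ell^1$, so ``keeping $\ell^{q_1}$ on the $f$-side and applying H\"older'' is not available, and your argument only gives the bound for $q\ge q_2$. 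Since $q_1=1$ makes the hypothesis $1/q\le 1/q_1+1/q_2$ vacuous, the endpoint clause of (1) as stated asserts the bound for every $q$. For $q<q_2$ that assertion is false, so no argument can close it. Symmetrically, the endpoint of (3) with $q_2=1$ only yields $q\ge q_1$.

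Here is a counterexample. Take $I=(0,1)$, time-independent functions, $r=r_1=r_2=\infty$, $p_1=p_2=p$, $q_1=q=1$, and $q_2=\infty$. Let $f\in\mathscr{S}$ have $\widehat f\ge0$ nontrivial and supported in $\{3/4\le|\xi|\le3/2\}$. Then $f\in\dB^{3/p}_{p,1}$, $f(0)>0$, and $S_{k-3}f=f$ for $k\ge5$. Let $\psi\in\mathscr{S}$ satisfy $\psi(0)\neq0$ and $\supp\widehat\psi\subset\{|\xi|\le1/100\}$, and put $g_M=\sum_{k\in10\mathbb{N},\,10\le k\le M}2^{-s_2k}\psi(x)e^{i2^kx_1}$. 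Then $\n{g_M}_{\dB^{s_2}_{p,\infty}}\le C$ uniformly in $M$, while $T_fg_M=fg_M$. Each summand $2^{-s_2k}f\psi e^{i2^kx_1}$ lives only in the blocks $j\in\{k-1,k,k+1\}$ and has $L^p$ norm $2^{-s_2k}\n{f\psi}_{L^p}>0$, so $\n{T_fg_M}_{\dB^{s_2}_{p,1}}\ge cM$. The endpoint clauses therefore need the extra restriction $q\ge q_2$ in (1) and $q\ge q_1$ in (3); with that added, your proof is complete. The paper only applies the lemma with strict inequalities, so nothing downstream is affected.
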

Lemma \ref{lemm:nonlin} may be proven by the para product argument in \cites{Bah-Che-Dan-11,Saw-18}; thus we omit the proof.
From this, we may immediately obtain the following corollary.
\begin{cor}\label{cor:para}
    Let $1 \leq p_1,p_2,q,q_1,q_2,r,r_1,r_2 \leq \infty$ satisfy 
        \begin{align}
            \frac{1}{q}\leq\frac{1}{q_1}+\frac{1}{q_2}, 
            \qquad
            \frac{1}{r}=\frac{1}{r_1}+\frac{1}{r_2}.
        \end{align}
    Let $s_1,s_2 \in \mathbb{R}$ satisfy
    \begin{align}
        s_1 < \frac{3}{p_1}, \quad
        s_2 < \min \Mp{\frac{3}{p_1},\frac{3}{p_2}},\quad
        s_1+s_2 > \max\Mp{0, 3\sp{\frac{1}{p_1}+\frac{1}{p_2}-1}}
    \end{align}
    Then, there exists a positive constant $C=C(p_1,p_2,q,q_1,q_2,s_1,s_2)$ such that
    \begin{align}
            \n{fg}_{\widetilde{L^r}(I;\dB_{p_2,q}^{s_1+s_2-\frac{3}{p_1}})}
            \leq
            C
            \n{f}_{\widetilde{L^{r_1}}(I;\dB_{p_1,q_1}^{s_1})}
            \n{g}_{\widetilde{L^{r_2}}(I;\dB_{p_2,q_2}^{s_2})}
    \end{align}
    for all intervals $I \subset \mathbb{R}$, $f \in \widetilde{L^{r_1}}(I;\dB_{p,q}^{s_1}(\mathbb{R}^3))$, and $g \in \widetilde{L^{r_1}}(I;\dB_{p,q}^{s_2}(\mathbb{R}^3))$.
\end{cor}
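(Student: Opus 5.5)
The statement to prove is Corollary \ref{cor:para}. The plan is to apply Lemma \ref{lemm:nonlin} separately to each of the three pieces of the para-product decomposition
\begin{align}
    fg = T_fg + R(f,g) + T_gf
\end{align}
and then conclude with the triangle inequality in the Chemin--Lerner norm $\widetilde{L^r}(I;\dB_{p_2,q}^{s_1+s_2-\frac{3}{p_1}})$. Every estimate in Lemma \ref{lemm:nonlin} lands in this same target space and has the same right-hand side $\n{f}_{\widetilde{L^{r_1}}(I;\dB_{p_1,q_1}^{s_1})}\n{g}_{\widetilde{L^{r_2}}(I;\dB_{p_2,q_2}^{s_2})}$. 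Hence it only remains to check that the hypotheses of the corollary imply the hypotheses of each of the three parts.

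The three checks are as follows.
\begin{enumerate}
\item For $T_fg$, part (1) of Lemma \ref{lemm:nonlin} requires $s_1<3/p_1$, and this is assumed directly.
\item For $R(f,g)$, part (2) requires $s_1+s_2>\max\{0,3(1/p_1+1/p_2-1)\}$, which is again exactly the assumption.
\item For $T_gf$, part (3) requires $s_2<\min\{3/p_1,3/p_2\}$, which is also assumed.
\end{enumerate}
In all three parts the exponent conditions $1/q\le 1/q_1+1/q_2$ and $1/r=1/r_1+1/r_2$ are the same as those in the corollary. The constants depend only on $(p_1,p_2,q,q_1,q_2,s_1,s_2)$, so adding the three bounds gives the claimed constant $C$.

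I expect no real obstacle. The one point needing care is to confirm that the roles of $f$ and $g$ in part (3) match the corollary, that is, $g$ carries the low frequencies, measured in $\dB_{p_2,q_2}^{s_2}$, while the output is still measured in $L^{p_2}$ with regularity index $s_1+s_2-3/p_1$. This is exactly how part (3) is stated, so the triangle inequality
\begin{align}
    \n{fg}_{\widetilde{L^r}(I;\dB_{p_2,q}^{s_1+s_2-\frac{3}{p_1}})}
    \le
    \n{T_fg}+\n{R(f,g)}+\n{T_gf},
\end{align}
with all norms on the right taken in the same space, closes the argument.
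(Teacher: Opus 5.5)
Your proposal is correct and is the same argument the paper has in mind. The paper states the corollary as an immediate consequence of Lemma \ref{lemm:nonlin}: split $fg = T_fg + R(f,g) + T_gf$, observe that the three hypotheses on $s_1,s_2$ are exactly those of parts (1)--(3), and sum the three bounds by the triangle inequality, all of them taken in the same target norm.
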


\end{document}